\documentclass[pdflatex,sn-mathphys-num]{sn-jnl}
\usepackage{amsmath,amssymb,amsfonts}
\usepackage{mathtools}
\usepackage{bm}
\usepackage{enumitem}
\usepackage{booktabs}
\usepackage{amsthm} 

\newcommand{\cE}{\mathcal{E}}
\newcommand{\cN}{\mathcal{N}}

\newcommand{\Si}{\Sigma}

\newcommand{\CC}{\mathbb{C}}
\newcommand{\NN}{\mathbb{N}}

\newcommand{\cO}{\mathcal{O}}
\newcommand{\la}{\lambda}
\newcommand{\lcp}{\operatorname{lcp}}
\newcommand{\lcs}{\operatorname{lcs}}
\newcommand{\dP}{d_P}
\newcommand{\dPS}{d_{\mathrm{PS}}}
\newcommand{\dS}{d_S}

\theoremstyle{thmstyleone}
\newtheorem{theorem}{Theorem}
\newtheorem{lemma}[theorem]{Lemma}

\newtheorem{conjecture}{Conjecture}

\theoremstyle{thmstyletwo}

\theoremstyle{thmstylethree}

\begin{document}
	
	\title[A Generalized Monoid of Words]{A Generalized Monoid of Words with Applications\\ to Divergent Arithmetic Products}
	
	\author[1]{\fnm{A.} \sur{\'Alvarez Cruz}}\email{amaury@ic.ufrj.br}
	\author[2]{\fnm{E. A.} \sur{\'Alvarez Guti\'errez}}\email{esteban.gutierrez.1@cp2.edu.br}
	
	\affil[1]{\orgdiv{Instituto de Computa\c{c}\~ao}, \orgname{Universidade Federal do Rio de Janeiro (UFRJ)}, \orgaddress{\city{Rio de Janeiro}, \country{Brazil}}}
	\affil[2]{\orgdiv{Col\'egio Pedro II, Campus Centro}, \orgaddress{\city{Rio de Janeiro}, \country{Brazil}}}
	
	\abstract{
		A generalized monoid of words \(\widetilde{\Si}^*\) is constructed that extends the free monoid \(\Si^*\) with elements of controlled infinite length. The construction uses a bidirectional prefix‑suffix metric and an asymptotic equivalence relation on moderate nets of finite words. The resulting quotient is a monoid carrying a natural partial order and a well‑defined reversal involution. Moulds, in the sense of \'Ecalle's resurgent analysis, are defined on this monoid: the logarithmic window provided by the asymptotic equivalence guarantees that moulds depending only on logarithmic prefixes descend to well‑defined functionals on the quotient. The framework is applied to the regularization of divergent arithmetic products whose oscillations follow a regular pattern. The alternating products of integers and factorials acquire canonical finite values that coincide with classical zeta regularization. A conjectural value is proposed for the alternating product of primes, consistent with numerical evidence and the Meissel–Mertens constant. The method is then extended to products that lie beyond the reach of classical regularization, such as products whose sign sequences are constant on dyadic blocks; a conjecture is proposed for the Thue--Morse product. The selection of evaluation functionals and renormalization schemes is systematized according to the divergence type of the arithmetic sequence.
	}
	
	\keywords{Generalized monoid, Infinite words, Moulds, Divergent arithmetic products, Zeta regularization, Thue--Morse sequence}
	
	\maketitle
	
	\section{Introduction}
	\label{sec:intro}
	
	The free monoid \(\Si^*\) over a finite alphabet \(\Si\) consists of all finite words with concatenation as product. It is a fundamental object in combinatorics and formal language theory \cite{Lothaire1983, Lothaire2002}, yet it offers no room for words of infinite length. Extensions that admit infinite words have been studied extensively, but each sacrifices part of the algebraic structure. The space of \(\omega\)-words \(\Si^\omega\) \cite{PerrinPin2004} consists of infinite sequences \(x_1x_2\cdots\). The union \(\Si^\infty = \Si^* \cup \Si^\omega\) is widely used in automata theory, but concatenation is only partially defined: a finite word can be prefixed to an \(\omega\)-word, while the product of two \(\omega\)-words is undefined. As Perrin and Pin observe \cite{PerrinPin2004}, \(\Si^\infty\) is not a monoid. Profinite completions \cite{Almeida1995} capture separation by finite automata rather than asymptotic length growth, and they too lack a full monoid structure with controlled infinite length.
	
	The classical treatment of infinite words, as presented in the work of Lothaire \cite{Lothaire2002}, views them as functions from \(\NN\) to an alphabet \(\Si\) or as topological limits of finite prefixes in a Cantor space. This framework has structural limitations for asymptotic analysis: the concatenation of two infinite words remains undefined, the set \(\Si^\infty\) does not form a monoid, and the prefix metric, while providing a convenient topology, lacks an algebraic mechanism to identify words that are indistinguishable at a given asymptotic scale.
	
	The common obstacle is that none of these frameworks provides a monoid that simultaneously contains all finite words and admits elements whose length grows in a controlled way, with a fully defined concatenation.
	
	In this paper we construct such a monoid from first principles. The central idea is to work with nets \((w_\varepsilon)_{\varepsilon\in(0,1]}\) of finite words, where the parameter \(\varepsilon\to0^+\) serves as an asymptotic bookkeeping device, and to identify nets whose longest common prefix and longest common suffix grow faster than any constant multiple of \(\log_2(1/\varepsilon)\). By controlling both ends we ensure that the reversal of a word yields a well‑defined involution on the quotient, a feature absent in earlier attempts. This scale is forced by the polynomial growth of the arithmetic sequences that appear in the applications: for products over integers, primes, or factorials, the natural divergence rate is logarithmic in the truncation parameter, and the logarithmic window is the largest scale at which functionals depending only on prefixes remain well defined on the quotient. Other scales are possible and are discussed in Section~\ref{sec:selection}, where the choice of the scale function is systematized.
	
	The distance between two words is measured by the length of their longest common prefix and their longest common suffix. Formally, for \(u,v\in\Si^*\), let \(\lcp(u,v)\) and \(\lcs(u,v)\) be the length of the maximal common prefix and maximal common suffix, respectively, with the convention \(\lcp(u,u)=\lcs(u,u)=+\infty\). The bidirectional metric is defined as
	\[
	\dPS(u,v) = \max\{2^{-\lcp(u,v)},\, 2^{-\lcs(u,v)}\},
	\]
	with \(2^{-\infty}=0\). This metric takes values in the discrete set \(\{0\}\cup\{2^{-k}:k\in\NN_0\}\) and satisfies the strong ultrametric inequality. Two distinct words can be arbitrarily close if they share a long prefix and a long suffix.
	
	A net \((w_\varepsilon)_{\varepsilon\in(0,1]}\) is called moderate if its length grows at most polynomially in \(1/\varepsilon\), that is, if there exist constants \(C>0\), \(N\in\NN\), and \(\varepsilon_0>0\) such that \(|w_\varepsilon|\le C\varepsilon^{-N}\) for all \(\varepsilon<\varepsilon_0\). The set of all moderate nets, denoted by \(\cE_M(\Si^*)\), forms a monoid under coordinatewise concatenation.
	
	Within \(\cE_M(\Si^*)\), certain nets are asymptotically indistinguishable from the empty word. These are the nets that eventually become empty: there exists \(\varepsilon_0>0\) such that \(w_\varepsilon = \la\) for all \(\varepsilon<\varepsilon_0\). Such nets form a submonoid \(\cN(\Si^*)\), which plays the role of a negligible ideal. Two moderate nets \((u_\varepsilon)\) and \((v_\varepsilon)\) are declared equivalent, written \((u_\varepsilon)\sim(v_\varepsilon)\), if for every positive integer \(m\) there exists \(\varepsilon_0>0\) such that
	\[
	\min\{\lcp(u_\varepsilon, v_\varepsilon),\, \lcs(u_\varepsilon, v_\varepsilon)\} \ge m\log_2(1/\varepsilon) \qquad \text{for all } \varepsilon<\varepsilon_0.
	\]
	Equivalently, \(\dPS(u_\varepsilon, v_\varepsilon) = \cO(\varepsilon^m)\) for every \(m\). This equivalence relation is compatible with concatenation, and the quotient
	\[
	\widetilde{\Si}^* := \cE_M(\Si^*) / \sim
	\]
	inherits a well-defined monoid structure. The class of the constant empty net is precisely \(\cN(\Si^*)\), confirming that negligible nets form the equivalence class of the identity.
	
	To illustrate the construction with a concrete example, take the singleton alphabet \(\Si=\{a\}\). Consider the nets
	\[
	u_\varepsilon = a^{\lfloor 1/\varepsilon\rfloor}, \qquad v_\varepsilon = a^{\lfloor 1/\varepsilon\rfloor + 1}, \qquad w_\varepsilon = a^{\lfloor 1/\varepsilon\rfloor}\,\texttt{b}.
	\]
	All three are moderate. The first two share the prefix \(a^{\lfloor 1/\varepsilon\rfloor}\) and
	suffix \(a^{\lfloor 1/\varepsilon\rfloor}\) (the second word is longer
	by one letter, but both end with a long block of \(a\)'s). Their common
	prefix and suffix grow faster than any fixed multiple of
	\(\log_2(1/\varepsilon)\); hence \((u_\varepsilon)\sim(v_\varepsilon)\). The third net ends with \(\texttt{b}\), while \(u_\varepsilon\) ends
	with \(a\). Hence their longest common suffix is empty:
	\[
	\operatorname{lcs}(u_\varepsilon,w_\varepsilon)=0.
	\]
	 Thus \(\dPS(w_\varepsilon,u_\varepsilon)\ge 1/2\) and \((w_\varepsilon)\not\sim(u_\varepsilon)\). This shows that the new metric distinguishes nets that differ at the tail.
	
	A finite word inside the generalized monoid. The constant net \(u_\varepsilon = a^k\) (with \(k\) fixed) is moderate because its length is the constant \(k\). Its equivalence class \(\iota(a^k) = [(a^k)_\varepsilon]\) corresponds to the ordinary finite word \(a^k\) under the canonical injection \(\iota : \Si^* \hookrightarrow \widetilde{\Si}^*\). Two distinct finite words give distinct classes, because their bidirectional distance is a positive constant, which cannot be \(\cO(\varepsilon^m)\) for all \(m\). Thus the free monoid \(\Si^*\) sits faithfully inside \(\widetilde{\Si}^*\).
	
	A genuine infinite word. The net \(v_\varepsilon = a^{\lfloor 1/\varepsilon\rfloor}\) is moderate with \(N=1\). Its length grows polynomially, and it never stabilizes to any finite word. For any finite word \(a^k\), the longest common prefix with \(v_\varepsilon\) is exactly \(k\) for all \(\varepsilon\) small enough that \(\lfloor 1/\varepsilon\rfloor > k\); the longest common suffix is \(\min(k,\lfloor 1/\varepsilon\rfloor)=k\). Hence \(\dPS(v_\varepsilon, a^k) = 2^{-k}\), a positive constant, which is not \(\cO(\varepsilon^m)\) for \(m \ge 1\). Therefore the class \([(v_\varepsilon)]\) is not equal to any finite word. It represents a generalized word of controlled infinite length.
	
	The generalized monoid \(\widetilde{\Si}^*\) contains \(\Si^*\) as a submonoid via constant nets. Its new elements represent generalized words of controlled infinite length. A crucial consequence of the definition of \(\sim\) is that two equivalent nets coincide on prefixes of length \(m\log_2(1/\varepsilon)\) for every \(m\). Therefore, any functional that inspects only the first \(N_0(\varepsilon)=\lfloor\log_2(1/\varepsilon)\rfloor\) symbols of its argument descends to a well-defined map on the quotient. This observation is the engine behind all the regularization phenomena studied in this paper.
	
	The construction is motivated by a concrete problem: assigning finite values to divergent infinite products of arithmetic origin. Classical methods such as Ces\`aro summation, Borel summation, and zeta regularization \cite{Hardy1949, Edwards1974, Titchmarsh1986} achieve this through analytic continuation. The generalized monoid offers an algebraic alternative. Consider the alternating product of integers
	\[
	P = \prod_{n=1}^{\infty} n^{(-1)^{n+1}} = 1\cdot 2^{-1}\cdot 3\cdot 4^{-1}\cdots .
	\]
	Its partial products oscillate between \(0\) and \(\infty\), so no classical limit exists. By encoding the signs as a word over the alphabet \(\Si=\{a,b\}\), forming the canonical alternating net \((ab)^{N(\varepsilon)}\) with \(N(\varepsilon)=\lfloor\varepsilon^{-1}\rfloor\), and evaluating a symmetrized functional that averages even and odd truncations, the oscillations cancel exactly. A logarithmic Ces\`aro renormalization then extracts the finite value \(\sqrt{2/\pi}\), which coincides with the zeta-regularized value \(\exp(-\eta'(0))\).
	
	The same method extends to the alternating product of factorials, which collapses to double factorials and reveals the striking symmetry \(\mathfrak{F}_{\mathrm{ren}} = \sqrt{P_{\mathrm{ren}}}\). For the alternating product of primes, the monoid framework yields a regularized value that is conjectured to be \(\sqrt{2}\,e^{-\mathfrak{M}}\), where \(\mathfrak{M}\) is the Meissel–Mertens constant; this value is strongly supported by numerical computation and is consistent with known analytic results.
	
	The method also applies to products that lie beyond the reach of classical regularization, such as products whose sign sequences are constant on dyadic blocks. For the Thue--Morse product, which is inaccessible to all classical summation methods, a conjecture is proposed based on the algebraic framework of the generalized monoid. The present work continues the tradition of assigning finite values to divergent expressions by combining algebraic combinatorics on words with analytic number theory. The selection of evaluation functionals and renormalization schemes is systematized in Section~\ref{sec:selection}. Throughout, the functionals evaluated on the generalized monoid are moulds in the sense of \'Ecalle \cite{Ecalle1981a, Ecalle1981b, Ecalle1985}; the generalized monoid provides an asymptotic completion of the index set for moulds, extending their reach to divergent arithmetic products.
	
	The paper is organized as follows. Section~\ref{sec:prelim} introduces the bidirectional metric, moderate nets, negligible nets, asymptotic equivalence, and the monoid, and defines moulds on the free monoid. Section~\ref{sec:first_order} develops the algebraic properties of \(\widetilde{\Si}^*\) and shows how moulds extend to generalized words. Section~\ref{sec:applications} applies the framework to the regularization of oscillating arithmetic products: the alternating products of integers and factorials, a conjecture for the alternating product of primes, and products based on dyadic blocks as well as the Thue--Morse product. Section~\ref{sec:selection} systematizes the choice of evaluation functionals and renormalization schemes. Section~\ref{sec:conclusion} concludes. Appendices provide examples of moderate and negligible nets, numerical illustrations, and detailed calculations.
	
	\section{Preliminaries}
	\label{sec:prelim}
	
	Throughout this paper, \(\Si\) denotes a finite non-empty alphabet. The free monoid \(\Si^*\) consists of all finite words over \(\Si\), including the empty word \(\la\). The product is concatenation, and \(|\cdot|\) denotes word length. A monoid is a set equipped with an associative binary operation and a neutral element \cite{Lothaire2002}.
	
	For two words \(u,v\in\Si^*\), let \(\lcp(u,v)\) be the length of their longest common prefix and \(\lcs(u,v)\) the length of their longest common suffix. If \(u=v\), we set \(\lcp(u,v)=\lcs(u,v)=+\infty\). The classical prefix metric is \(\dP(u,v)=2^{-\lcp(u,v)}\). Analogously, the suffix metric is \(\dS(u,v)=2^{-\lcs(u,v)}\). The bidirectional metric on \(\Si^*\) is defined as
	\[
	\dPS(u,v) = \max\{\dP(u,v),\, \dS(u,v)\}.
	\]
	All three metrics take values in the discrete set \(\{0\}\cup\{2^{-k}: k\in\NN_0\}\), a subset of \([0,1]\). The condition \(\dPS(u,v)\le 2^{-k}\) is equivalent to
	\[
	\min\{\lcp(u,v),\, \lcs(u,v)\} \ge k.
	\]
	For example, over \(\Si=\{\texttt{a},\texttt{b}\}\), \(\lcp(\texttt{abba},\texttt{abab})=2\), \(\lcp(\texttt{abba},\texttt{baa})=0\), \(\lcs(\texttt{abba},\texttt{baa})=2\) (both end in \texttt{ba}), \(\dPS(\texttt{abba},\texttt{baa})=2^{-0}=1\).
	
	The bidirectional metric satisfies the strong ultrametric inequality
	\[
	\dPS(u,w) \le \max\{\dPS(u,v),\, \dPS(v,w)\} \qquad \forall u,v,w\in\Si^*.
	\]
	This follows because both \(\dP\) and \(\dS\) are ultrametrics and the maximum of two ultrametrics is an ultrametric.
	
	Concatenation interacts with the bidirectional metric in a controlled way. For any \(u_1,u_2,v_1,v_2\in\Si^*\),
	\[
	\dPS(u_1u_2,\, v_1v_2) \le \max\{\dPS(u_1,v_1),\, \dPS(u_2,v_2)\}.
	\]
	Indeed, for the prefix part we have \(\dP(u_1u_2,v_1v_2)\le\max\{\dP(u_1,v_1),\dP(u_2,v_2)\}\) as before. For the suffix part, using the reversal, \(\dS(u_1u_2,v_1v_2) = \dP((u_1u_2)^\vee,(v_1v_2)^\vee) = \dP(u_2^\vee u_1^\vee,\, v_2^\vee v_1^\vee) \le \max\{\dP(u_2^\vee,v_2^\vee),\,\dP(u_1^\vee,v_1^\vee)\} = \max\{\dS(u_2,v_2),\,\dS(u_1,v_1)\}\). Hence
	\[
	\begin{aligned}
		\dPS(u_1u_2,v_1v_2)
		&= \max\{\dP(\dots),\dS(\dots)\}\\
		&\leq \max\{\max\{\dP(u_1,v_1),\dP(u_2,v_2)\},\\
		&\qquad\quad \max\{\dS(u_1,v_1),\dS(u_2,v_2)\}\}\\
		&= \max\{\dPS(u_1,v_1),\dPS(u_2,v_2)\}.
	\end{aligned}
	\]
	This subadditivity bound is the key to the entire construction: it guarantees that if two pairs of nets are equivalent, their concatenations remain equivalent, so that the product in the quotient \(\widetilde{\Si}^*\) is well defined.
	
	A net of words is a family \((w_\varepsilon)_{\varepsilon\in(0,1]}\) indexed by a parameter \(\varepsilon\to0^+\). Only the behaviour for arbitrarily small \(\varepsilon\) matters. A net is called moderate if there exist an integer \(N\in\NN\), a constant \(C>0\), and \(\varepsilon_0>0\) such that
	\[
	|w_\varepsilon| \le C\,\varepsilon^{-N} \qquad \text{for all } \varepsilon<\varepsilon_0.
	\]
	The set of all moderate nets is denoted by \(\cE_M(\Si^*)\). Under coordinatewise concatenation \((u_\varepsilon)\cdot(v_\varepsilon)=(u_\varepsilon v_\varepsilon)\), the set \(\cE_M(\Si^*)\) forms a monoid with identity element \((\la)_\varepsilon\), the constant empty net.
	
	Within \(\cE_M(\Si^*)\), certain nets are asymptotically indistinguishable from the empty word. A moderate net \((w_\varepsilon)\) is called negligible if there exists
	\(\varepsilon_0>0\) such that
	\[
	w_\varepsilon=\la
	\qquad\text{for all }\varepsilon<\varepsilon_0.
	\]
	Such nets form a submonoid \(\cN(\Si^*)\), which plays the role of a
	negligible ideal.  (Concrete examples of moderate and negligible nets are provided in Appendix~\ref{app:examples_nets}.)
	
	Two moderate nets \((u_\varepsilon)\) and \((v_\varepsilon)\) are asymptotically equivalent, written \((u_\varepsilon)\sim(v_\varepsilon)\), if for every \(m\in\NN\) there exists \(\varepsilon_0>0\) such that
	\[
	\min\{\lcp(u_\varepsilon, v_\varepsilon),\, \lcs(u_\varepsilon, v_\varepsilon)\} \ge m\log_2(1/\varepsilon) \qquad \text{for all } \varepsilon<\varepsilon_0.
	\]
	Equivalently, \(\dPS(u_\varepsilon,v_\varepsilon) = \cO(\varepsilon^m)\) for every \(m\). The relation \(\sim\) is an equivalence relation on \(\cE_M(\Si^*)\). Reflexivity and symmetry are immediate. For transitivity, the ultrametric inequality gives \(\dPS(u_\varepsilon,w_\varepsilon)\le\max\{\dPS(u_\varepsilon,v_\varepsilon), \dPS(v_\varepsilon,w_\varepsilon)\}\), and if both distances on the right are \(\cO(\varepsilon^m)\) for all \(m\), so is their maximum.
	
	A net is equivalent to the constant empty net if and only if it is negligible. Indeed, if \((w_\varepsilon)\sim(\la)_\varepsilon\), then taking \(m=1\) gives \(\dPS(w_\varepsilon,\la)\le C\varepsilon\) for small \(\varepsilon\). For any non-empty word \(w\), we have \(\lcp(w,\la)=\lcs(w,\la)=0\), so \(\dPS(w,\la)=1\). Thus, the condition \(\dPS(w_\varepsilon,\la) = \cO(\varepsilon^m)\) for all \(m\ge 1\) forces \(w_\varepsilon = \la\) for all sufficiently small \(\varepsilon\). The converse is clear.
	
	The generalized monoid of words is the quotient
	\[
	\widetilde{\Si}^* := \cE_M(\Si^*) / \sim.
	\]
	The equivalence class of a net \((w_\varepsilon)\) is denoted by \([(w_\varepsilon)]\). Concatenation is defined by \([(u_\varepsilon)]\cdot[(v_\varepsilon)] = [(u_\varepsilon v_\varepsilon)]\). This is well defined thanks to the subadditivity of \(\dPS\). Associativity is inherited from \(\Si^*\), and the identity element is \(\mathbf{1}=[(\la)_\varepsilon]\), which coincides with the class of all negligible nets.
	
	The free monoid \(\Si^*\) embeds into \(\widetilde{\Si}^*\) via the map \(\iota(w)=[(w)_\varepsilon]\), which sends each finite word to the class of its constant net. This map is an injective monoid homomorphism.
	
	The definition of \(\sim\) guarantees that two equivalent nets coincide on prefixes of length \(m\log_2(1/\varepsilon)\) for any prescribed \(m\). Taking \(m=2\), for sufficiently small \(\varepsilon\) the common prefix length exceeds \(\log_2(1/\varepsilon)+1\). Hence any functional that depends only on the first
	\[
	N_0(\varepsilon) := \lfloor\log_2(1/\varepsilon)\rfloor
	\]
	symbols of its argument will be invariant under \(\sim\). This observation is the foundation of all regularization phenomena studied in this paper.
	
	\textbf{Basic algebraic properties.}
	The generalized monoid \(\widetilde{\Si}^*\) possesses a unit \(\mathbf{1}=[(\la)_\varepsilon]\) and concatenation is not commutative, exactly as in \(\Si^*\).  
	Because the bidirectional metric is symmetric under reversal (\(\dPS(u^\vee,v^\vee)=\dPS(u,v)\)), the involution
	\[
	[(w_\varepsilon)]^\vee := [(w_\varepsilon^\vee)]
	\]
	is well defined on \(\widetilde{\Si}^*\) and satisfies \(([u][v])^\vee = [v]^\vee [u]^\vee\) and \(([w]^\vee)^\vee = [w]\). Thus \(\widetilde{\Si}^*\) is equipped with a natural anti‑automorphism.
	
	The monoid is not right‑cancellative (nor left‑cancellative). For example, over a singleton alphabet, taking \(u_\varepsilon = a\), \(v_\varepsilon = \la\), and \(w_\varepsilon = a^{\lfloor 1/\varepsilon\rfloor}\), one has \([u][w]=[v][w]\) while \([u]\neq[v]\). The submonoid consisting of classes of constant nets, i.e. the image of the embedding \(\iota(\Si^*)\), is both left‑ and right‑cancellative because it is isomorphic to the free monoid \(\Si^*\).
	
	\textbf{Moulds.}
	The functionals that we evaluate on words are instances of \emph{moulds}, a concept introduced by \'Ecalle \cite{Ecalle1981a, Ecalle1981b, Ecalle1985}. A mould is a function \(M:\Si^*\to\CC\) defined on the free monoid. In the present context, moulds arise naturally as products or sums indexed by the letters of a word. The two basic types are multiplicative moulds,
	\[
	M(w) = \prod_{k=1}^{|w|} f(k, w_k),
	\]
	and additive moulds (or log‑moulds),
	\[
	\Lambda(w) = \sum_{k=1}^{|w|} g(k, w_k),
	\]
	where \(f,g:\NN\times\Si\to\CC\) are given functions of the position and the symbol.
	
	In the generalized monoid \(\widetilde{\Si}^*\), we evaluate moulds on logarithmic prefixes of representatives, and the invariance under \(\sim\) guarantees that the resulting generalized number depends only on the class. The well‑definedness of the reversal involution also allows us to compare a mould with its reversed counterpart, a feature that was impossible in the earlier prefix‑only approach.
	
	\textbf{Why the equivalence captures the essential information.}
	The requirement that two equivalent nets share a logarithmically long prefix (and suffix) may seem to discard the tail of the word, but the functionals we study depend only on prefixes within the logarithmic window \(N_0(\varepsilon)\). For each fixed \(\varepsilon\), the value of such a functional is determined solely by the first \(N_0(\varepsilon)\) symbols. As \(\varepsilon\to0\), the window expands, and the sequence of values encodes the asymptotic behaviour of the whole series. The equivalence \(\sim\) identifies nets that are indistinguishable from the viewpoint of these windowed functionals for all sufficiently small \(\varepsilon\), guaranteeing that the regularized limit is independent of the representative. In this sense, the tail is irrelevant for the evaluation, while the constant regularized value emerges from the growth of the prefixes as the window widens.
	
	\section{The generalized monoid}
	\label{sec:first_order}
	
	We now develop the algebraic structure of \(\widetilde{\Si}^*\) and demonstrate that it is a genuine extension of the free monoid, rich enough to accommodate infinite words while retaining the combinatorial operations that make \(\Si^*\) useful: concatenation, a prefix order, and now a reversal involution. Moreover, we establish the connection with \'Ecalle's mould calculus \cite{Ecalle1981a, Ecalle1981b, Ecalle1985}, showing that the generalized monoid provides an asymptotic completion of the index set for moulds.
	
	\textbf{The monoid of moderate nets.}
	Recall that \(\cE_M(\Si^*)\) is the set of moderate nets under coordinatewise concatenation, with identity \((\la)_\varepsilon\). The constant nets form a submonoid isomorphic to \(\Si^*\). The bidirectional metric controls how concatenation affects asymptotic closeness through the subadditivity bound \(\dPS(u_1u_2,v_1v_2)\le\max\{\dPS(u_1,v_1),\dPS(u_2,v_2)\}\).
	
	\textbf{The quotient monoid.}
	The asymptotic equivalence \(\sim\) identifies nets whose common prefix and common suffix both grow faster than any constant multiple of \(\log(1/\varepsilon)\). The quotient \(\widetilde{\Si}^* = \cE_M(\Si^*)/{\sim}\) is a monoid under concatenation of representatives. Well‑definedness follows from the subadditivity of \(\dPS\). The product of any two generalized words is again a generalized word, even when both have infinite length.
	
	\textbf{Moulds on the generalized monoid.}
	Given a mould \(M\), we define its evaluation on a class \([(w_\varepsilon)]\) by
	\[
	\widetilde{M}([(w_\varepsilon)]) = [(M(w_\varepsilon))_\varepsilon],
	\]
	where the right‑hand side denotes the equivalence class of the net \((M(w_\varepsilon))_\varepsilon\) of complex numbers.  Two nets of complex numbers \((z_\varepsilon)\) and \((z'_\varepsilon)\) are called equivalent if \(|z_\varepsilon-z'_\varepsilon| = \cO(\varepsilon^m)\) for every \(m\in\NN\); a net is moderate if \(|z_\varepsilon| = \cO(\varepsilon^{-N})\) for some \(N\).  These definitions mimic those for words and turn the set of moderate complex nets into an algebra (see \cite{GrosserEtAl2001} for a detailed treatment of generalized numbers in asymptotic contexts).  The extension \(\widetilde{M}\) is well defined whenever the mould depends only on prefixes of logarithmic length and the net \((M(w_\varepsilon))_\varepsilon\) is moderate.  For log‑moulds arising from arithmetic products, this condition is satisfied because the polynomial growth of the word length controls the growth of the mould.  The logarithmic window \(N_0(\varepsilon)=\lfloor\log_2(1/\varepsilon)\rfloor\) then ensures invariance under \(\sim\).
	
	\textbf{Non‑triviality and the richness of classes.}
	The quotient is strictly larger than \(\Si^*\). Over a non‑trivial alphabet, there are uncountably many distinct classes determined by the asymptotic behaviour of both the prefix and the suffix.
	
	\textbf{Embedding of the free monoid.}
	The map \(\iota:\Si^*\to\widetilde{\Si}^*\) is an injective monoid homomorphism. Its image consists exactly of the classes of bounded nets, and it is a cancellative submonoid.
	
	\textbf{The logarithmic window and prefix functionals.}
Logarithmic prefix functionals descend to well-defined maps on $\widetilde{\Sigma}^*$
precisely because the equivalence relation $\sim$ identifies nets whose prefixes
agree up to the logarithmic window $N_0(\varepsilon)$. Consequently, any functional
$F_\varepsilon$ that depends only on the first $N_0(\varepsilon)+1$ symbols is
invariant under $\sim$, and the evaluation
$[(u_\varepsilon)]\mapsto[(F_\varepsilon(u_\varepsilon))_\varepsilon]$ is well
defined on $\widetilde{\Si}^*$. For a log‑mould $\Lambda$, the truncated evaluation
$\Lambda_{N_0}(w) = \Lambda(w[1\ldots N_0])$ descends to the quotient.
	
	\textbf{Partial order.}
	The classical prefix order extends to \(\widetilde{\Si}^*\): write \([u]\preceq[v]\) if there exists \([p]\) such that \([v]=[u]\cdot[p]\). This is a partial order compatible with left concatenation.
	
\paragraph{Why this structure matters.} The generalized monoid $\widetilde{\Sigma}^*$ 
provides an algebraic setting where infinite sequences can be concatenated, 
reversed, and ordered by prefixes, all while respecting an asymptotic 
equivalence that identifies sequences indistinguishable at any finite 
logarithmic scale.
	
	\section{Applications: regularization of oscillating arithmetic products}
	\label{sec:applications}
	
	The algebraic structure developed so far is general. We now show how it solves a concrete problem: assigning canonical finite values to divergent products of arithmetic origin whose partial products oscillate between zero and infinity. All examples in this section are treated within the generalized monoid \(\widetilde{\Si}^*\).
	
	Three products are treated in detail: the alternating product of integers, a conjecture for the alternating product of primes, and the alternating product of factorials. Each case illustrates a different aspect of the method, from simple symmetrization to the appearance of double logarithms and unexpected symmetries. Throughout this section, all moulds are evaluated on the canonical alternating net and descend to the quotient via the logarithmic window mechanism established in Section~\ref{sec:first_order}.
	
	\textbf{Encoding alternating signs.}
	Fix the two-letter alphabet \(\Si=\{a,b\}\), where \(a\) encodes the exponent \(+1\) and \(b\) encodes \(-1\). The alternating sign sequence \(\varepsilon_k = (-1)^{k+1} = +1,-1,+1,-1,\ldots\) is periodic. For each \(\varepsilon\in(0,1]\) set \(N(\varepsilon)=\lfloor\varepsilon^{-1}\rfloor\) and define the canonical alternating net
	\[
	w_\varepsilon = (ab)^{N(\varepsilon)}.
	\]
	Its length is \(2N(\varepsilon)\le 2\varepsilon^{-1}\), so it is moderate. The \(k\)-th letter is \(a\) when \(k\) is odd and \(b\) when \(k\) is even, giving precisely the sign \((-1)^{k+1}\) for \(k\le 2N(\varepsilon)\). The class \(\mathbf{w}=[(w_\varepsilon)]\in\widetilde{\Si}^*\) is the generalized alternating word.
	
	\textbf{The symmetrized functional.}
	For a log-mould \(\Lambda:\Si^*\to\CC\), the symmetrized functional is defined on prefixes of the logarithmic window \(N_0(\varepsilon)=\lfloor\log_2(1/\varepsilon)\rfloor\) by
	\[
	\widehat{F}_\varepsilon(u) = \frac{1}{2}\Bigl(
	\Lambda\bigl(u[1\ldots 2\lfloor N_0/2\rfloor]\bigr) +
	\Lambda\bigl(u[1\ldots 2\lfloor N_0/2\rfloor+1]\bigr)
	\Bigr).
	\]
	It averages the evaluations on the longest even prefix not exceeding \(N_0(\varepsilon)\) and the immediately following odd prefix. Since \(\widehat{F}_\varepsilon\) depends only on the first \(N_0(\varepsilon)+1\) symbols, it descends to a well-defined functional on \(\widetilde{\Si}^*\) by the criterion of Section~\ref{sec:first_order}. When the log-mould oscillates between even and odd truncations, the symmetrized functional cancels the leading oscillatory terms, leaving a pure constant plus a vanishing error. This is the algebraic counterpart of the cancellation that zeta regularization achieves through analytic continuation.
	
	\textbf{The logarithmic Ces\`aro renormalization.}
	To extract an ordinary complex number from the symmetrized evaluation, we use the logarithmic Ces\`aro mean. For a net \((z_\varepsilon)\) of complex numbers, set
	\[
	\Phi_{\mathrm{LC}}(z)
	=
	\lim_{T\to\infty}
	\frac1T
	\int_1^T
	\frac{\log|z_{2^{-t}}|}{t}\,dt.
	\]
	whenever the limit exists. The change of variable \(t=\log_2(1/\varepsilon)\) maps the parameter interval \((0,1]\) to the half-line \([0,\infty)\) and converts the logarithmic observation scale \(N_0(\varepsilon)\sim t\) into a linear time parameter. The Ces\`aro average then extracts the constant term in the asymptotic expansion of \(\log|z_{2^{-t}}|\). This renormalization is shift-invariant on the logarithmic axis, a property forced by the ultrametric equivalence on words. Other renormalization schemes (Riesz means, Abel means, subtraction of principal parts) will appear in later sections when the divergence type demands them.
	
	\textbf{Alternating product of integers.}
	This is a regularization: the symmetrized functional and the logarithmic Ces\`aro mean suffice.
	
	Consider the formal product
	\begin{equation}\label{eq:prod_integers}
		P = \prod_{n=1}^{\infty} n^{(-1)^{n+1}} = 1\cdot 2^{-1}\cdot 3\cdot 4^{-1}\cdots .
	\end{equation}
	Its partial products are
	\[
	P_{2m} = \frac{1\cdot 3\cdots(2m-1)}{2\cdot 4\cdots(2m)} = \frac{(2m)!}{2^{2m}(m!)^2}, \qquad
	P_{2m+1} = P_{2m}\cdot(2m+1).
	\]
	By Stirling's formula \(\log n! = n\log n - n + \frac12\log(2\pi n) + O(n^{-1})\) (\cite{Apostol1969}),
	\[
	\log P_{2m} = -\tfrac12\log m - \tfrac12\log\pi + O(m^{-1}), \qquad
	\log P_{2m+1} = \tfrac12\log m + \log 2 - \tfrac12\log\pi + O(m^{-1}).
	\]
	Thus \(P_{2m}\to0\) and \(P_{2m+1}\to\infty\): the product has no classical limit.
	
	Define the log-mould \(\Lambda(w) = \sum_{k=1}^{|w|} \varepsilon_k\log k\) with \(\varepsilon_k = +1\) if the \(k\)-th letter is \(a\) and \(-1\) if it is \(b\). For the canonical net, the prefix of length \(k\) carries exactly the first \(k\) alternating signs, so \(\Lambda(w_\varepsilon[1\ldots k]) = \sum_{n=1}^{k} (-1)^{n+1}\log n\). Hence \(\Lambda(w_\varepsilon[1\ldots 2m]) = \log P_{2m}\) and \(\Lambda(w_\varepsilon[1\ldots 2m+1]) = \log P_{2m+1}\).
	
	Set \(\varepsilon = 2^{-t}\), so \(N_0(\varepsilon)=\lfloor t\rfloor\) and \(m = \lfloor N_0/2\rfloor \sim t/2\). The symmetrized functional gives
	\[
	\widehat{F}_\varepsilon(w_\varepsilon) = \frac{1}{2}\bigl(\log P_{2m} + \log P_{2m+1}\bigr)
	= \frac{1}{2}\log\frac{2}{\pi} + O(m^{-1}).
	\]
	The terms \(\pm\frac12\log m\) cancel exactly. Let \(z_\varepsilon = \exp(\widehat{F}_\varepsilon(w_\varepsilon))\). Then \(\log z_{2^{-t}} = \frac12\log(2/\pi) + O(t^{-1})\), and the logarithmic Ces\`aro mean yields
	\[
	\Phi_{\mathrm{LC}}(z) = \frac{1}{2}\log\frac{2}{\pi}.
	\]
	
	\begin{theorem}\label{thm:integers}
		The regularized value of the alternating product of integers is
		\[
		P_{\mathrm{ren}} = \exp\bigl(\Phi_{\mathrm{LC}}(z)\bigr) = \sqrt{\frac{2}{\pi}}.
		\]
	\end{theorem}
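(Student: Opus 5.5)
The proof has three steps. First I check that the quantity being renormalized depends only on the class \(\mathbf{w}=[(w_\varepsilon)]\). Then I compute the symmetrized log-mould exactly, with an explicit remainder. Finally I pass to the logarithmic Ces\`aro limit.

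\textbf{Step 1: well-definedness.} By the logarithmic-window criterion of Section~\ref{sec:first_order}, \(\widehat{F}_\varepsilon\) reads only the first \(N_0(\varepsilon)+1\) letters. Any \((w'_\varepsilon)\sim(w_\varepsilon)\) therefore agrees with \((ab)^{N(\varepsilon)}\) on that window for small \(\varepsilon\), by the case \(m=2\) of the definition of \(\sim\). Hence \(z_\varepsilon=\exp(\widehat{F}_\varepsilon(w_\varepsilon))\) is independent of the representative for all sufficiently small \(\varepsilon\). Since a Ces\`aro limit in \(t=\log_2(1/\varepsilon)\) ignores any bounded initial segment of \(t\), \(\Phi_{\mathrm{LC}}(z)\) is an invariant of \(\mathbf{w}\). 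I also check that \(N(\varepsilon)=\lfloor 1/\varepsilon\rfloor\ge N_0(\varepsilon)+1\) for small \(\varepsilon\). This guarantees that the prefixes used really are the first \(2m\) and \(2m+1\) alternating signs.

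\textbf{Step 2: exact cancellation.} I will not add two separate Stirling expansions. Instead I write \(\log P_{2m}+\log P_{2m+1}=2\log P_{2m}+\log(2m+1)\) and use \(P_{2m}=\binom{2m}{m}4^{-m}\). The standard bounds \(\frac{1}{\sqrt{\pi(m+1/2)}}\le \binom{2m}{m}4^{-m}\le\frac{1}{\sqrt{\pi m}}\) (or Stirling with remainder, as cited) give \(2\log P_{2m}=-\log(\pi m)+O(m^{-1})\). Hence
\[
\widehat{F}_{2^{-t}}(w_{2^{-t}})=\tfrac12\log\tfrac{2}{\pi}+\tfrac12\log\bigl(1+\tfrac{1}{2m}\bigr)+O(m^{-1})=\tfrac12\log\tfrac{2}{\pi}+O(t^{-1}),
\]
with \(m=\lfloor\lfloor t\rfloor/2\rfloor\ge t/2-1\) for \(t\ge2\). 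For \(t\in[1,2)\) the functional is a fixed finite number, namely \(\tfrac12\log 2\) from \(m=0\), which is harmless.

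\textbf{Step 3: the renormalization, and the expected obstacle.} This step needs care. Read literally, \(\frac1T\int_1^T \frac{\log|z_{2^{-t}}|}{t}\,dt\) sends a function tending to a constant \(c\) to \(c\log T/T\to0\), which contradicts the stated purpose of extracting the constant term. I will therefore interpret \(\Phi_{\mathrm{LC}}\) as the paper describes it, as the Ces\`aro mean \(\lim_{T\to\infty}\frac1T\int_1^T\log|z_{2^{-t}}|\,dt\) in the logarithmic time \(t\), and say so explicitly. Under this reading the claim follows at once from Step 2. The error \(O(t^{-1})\) integrates to \(O(\log T)\), which is killed by the factor \(1/T\), and the bounded contribution from \(t\in[1,2)\) likewise vanishes. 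This gives \(\Phi_{\mathrm{LC}}(z)=\frac12\log\frac2\pi\) and so \(P_{\mathrm{ren}}=\sqrt{2/\pi}\).

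As a consistency check, I would verify \(\exp(-\eta'(0))=\sqrt{2/\pi}\) using \(\eta'(0)=\tfrac12\log(\pi/2)\). This follows from \(\eta(s)=(1-2^{1-s})\zeta(s)\), \(\zeta(0)=-\tfrac12\) and \(\zeta'(0)=-\tfrac12\log 2\pi\).
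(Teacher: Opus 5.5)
Your proof is correct and follows the paper's own route: encode the signs in \((ab)^{N(\varepsilon)}\), symmetrize so the \(\pm\frac12\log m\) terms cancel, then average in \(t=\log_2(1/\varepsilon)\). Writing the sum as \(2\log P_{2m}+\log(2m+1)\) with Wallis-type bounds on \(\binom{2m}{m}4^{-m}\) is only a cosmetic variant of the paper's two Stirling expansions. There is one harmless slip: for \(t\in[1,2)\) you have \(P_0=P_1=1\), so the functional equals \(0\), not \(\frac12\log 2\). Your reading of \(\Phi_{\mathrm{LC}}\) is the right one, and it repairs a genuine error in the paper's write-up: the displayed formula with the extra factor \(1/t\) sends every convergent net to \(0\), which would give \(P_{\mathrm{ren}}=1\). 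The paper's own Section~\ref{sec:selection} instead recovers \(\Phi_{\mathrm{LC}}\) from \(\Phi_{\psi,\chi}\) with \(\psi(T)=T\) and \(\chi=0\), which is exactly the plain mean \(\frac1T\int\log|z_{2^{-t}}|\,dt\) that you use.
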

	
	This value coincides with \(\exp(-\eta'(0))\), where \(\eta(s) = (1-2^{1-s})\zeta(s)\) is the Dirichlet eta function \cite{Titchmarsh1986}. Indeed, differentiating the series \(\eta(s) = \sum (-1)^{n+1}n^{-s}\) at \(s=0\) formally gives \(-\sum (-1)^{n+1}\log n\), and using \(\zeta(0)=-1/2\) and \(\zeta'(0)=-\frac12\log(2\pi)\) one obtains \(\eta'(0)=\frac12\log(\pi/2)\), hence \(\exp(-\eta'(0)) = \sqrt{2/\pi}\). The agreement between the algebraic and analytic regularizations is exact.
	
	\textbf{Alternating product of primes.}
	This example remains within the monoid. A double-logarithmic divergence is removed by an explicit subtraction. However, a rigorous derivation of the constant term is at present out of reach; the value is therefore stated as a conjecture supported by numerical evidence and consistency with known constants.
	
	Let \(p_n\) be the \(n\)-th prime. Consider
	\[
	\mathfrak{P} = \prod_{n=1}^{\infty} p_n^{(-1)^{n+1}}.
	\]
	Define the prime log-mould \(\Lambda_{\mathfrak{P}}(w) = \sum \varepsilon_k\log p_k\). For the canonical net, the partial products are
	\[
	\mathfrak{P}_{2m} = \frac{p_1p_3\cdots p_{2m-1}}{p_2p_4\cdots p_{2m}}, \qquad
	\mathfrak{P}_{2m+1} = \mathfrak{P}_{2m}\cdot p_{2m+1}.
	\]
	
	The asymptotic analysis uses the Prime Number Theorem \(p_n\sim n\log n\) 
	and Mertens' first theorem \(\sum_{p\le x}\frac{\log p}{p} = \log x + O(1)\) \cite{Tenenbaum2015}. 
	Writing \(p_n = n\log n\cdot \rho_n\) with \(\rho_n\to1\), the even partial sum 
	expands as
	\[
	\log\mathfrak{P}_{2m} = \sum_{j=1}^{m} (\log p_{2j-1} - \log p_{2j})
	= -\frac12\log m - \frac12\log\log m + C + o(1),
	\]
	for some constant \(C\). The odd partial sum adds the term \(\log p_{2m+1} = \log m + \log\log m + \log 2 + o(1)\), giving
	\[
	\log\mathfrak{P}_{2m+1} = \frac12\log m + \frac12\log\log m + \log 2 + C + o(1).
	\]
	Unlike the integer case, a divergent term \(\frac12\log\log m\) remains. To extract the constant, we subtract the principal part \(\frac12\log\log N_0(\varepsilon)\), defining
	\[
	\widetilde{F}^{\mathfrak{P}}_\varepsilon = \widehat{F}^{\mathfrak{P}}_\varepsilon - \tfrac12\log\log N_0.
	\]
	Since \(m\sim N_0/2\), we have \(\log\log m = \log\log N_0 + o(1)\), and therefore
	\[
	\widetilde{F}^{\mathfrak{P}}_\varepsilon(w_\varepsilon) = \frac12\log 2 + C + o(1).
	\]
	
	The constant \(C\) cannot be determined by the elementary estimates above; it would require a deep analysis of the alternating sum of \(1/p_n\) over the index \(n\). Extensive numerical computation (see Appendix~\ref{app:prime_rigorous}) strongly suggests that
	\[
	C = -\mathfrak{M},
	\]
	where \(\mathfrak{M}\) is the Meissel–Mertens constant. We propose the following conjecture based on asymptotic heuristics and high‑precision numerical evidence:
	
	\begin{conjecture}\label{conj:primes}
		The regularized value of the alternating product of primes obtained from the generalized monoid is
		\[
		\mathfrak{P}_{\mathrm{ren}} = \sqrt{2}\,e^{-\mathfrak{M}} \approx 1.089.
		\]
	\end{conjecture}
	
	This value is consistent with numerical evidence and with the
	Meissel–Mertens constant, but a rigorous proof is not currently
	available.
	
	\textbf{Alternating product of factorials.}
	Again a regularization; the subtraction of \(\frac14\log N_0\) is an algebraic step inside the functional.
	
	Consider
	\[
	\mathfrak{F} = \prod_{n=1}^{\infty} (n!)^{(-1)^{n+1}}.
	\]
	Its partial products exhibit a remarkable simplification. Using \(\frac{(2k-1)!}{(2k)!} = \frac{1}{2k}\), we obtain a telescopic collapse:
	\[
	\mathfrak{F}_{2m} = \prod_{k=1}^{m} \frac{1}{2k} = \frac{1}{2^m m!} = \frac{1}{(2m)!!}, \qquad
	\mathfrak{F}_{2m+1} = \mathfrak{F}_{2m}\cdot (2m+1)! = (2m+1)!!.
	\]
	The partial products are exactly the double factorials. Their product is the Wallis ratio \cite{Edwards1974}:
	\[
	\mathfrak{F}_{2m}\cdot\mathfrak{F}_{2m+1} = \frac{(2m+1)!!}{(2m)!!} = P_{2m+1},
	\]
	where \(P_{2m+1}\) is the odd partial product of the alternating integer product. This identity links the factorial product directly to the integer case.
	
	Define the factorial log-mould \(\Lambda_{\mathfrak{F}}(w) = \sum \varepsilon_k\log(k!)\). The symmetrized functional evaluates to
	\[
	\widehat{F}^{\mathfrak{F}}_\varepsilon(w_\varepsilon) = \frac12\bigl(\log\mathfrak{F}_{2m} + \log\mathfrak{F}_{2m+1}\bigr)
	= \frac12\log P_{2m+1}.
	\]
	Using the asymptotics of \(P_{2m+1}\) from the integer case,
	\[
	\log P_{2m+1} = \frac12\log m + \log 2 - \frac12\log\pi + O(m^{-1}),
	\]
	and substituting \(m\sim N_0/2\) (so \(\log m = \log N_0 - \log 2 + o(1)\)),
	\[
	\widehat{F}^{\mathfrak{F}}_\varepsilon(w_\varepsilon) = \frac14\log N_0 + \frac14\log\frac{2}{\pi} + o(1).
	\]
	Subtracting the logarithmic divergence \(\frac14\log N_0\) yields a constant:
	\[
	\widetilde{F}^{\mathfrak{F}}_\varepsilon = \widehat{F}^{\mathfrak{F}}_\varepsilon - \tfrac14\log N_0, \qquad
	\widetilde{F}^{\mathfrak{F}}_\varepsilon(w_\varepsilon) \to \frac14\log\frac{2}{\pi}.
	\]
	
	\begin{theorem}\label{thm:factorials}
		The regularized value of the alternating product of factorials is
		\[
		\mathfrak{F}_{\mathrm{ren}} = \left(\frac{2}{\pi}\right)^{1/4}.
		\]
	\end{theorem}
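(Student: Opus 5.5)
The plan is to follow the template of Theorem~\ref{thm:integers} and make each step rigorous. Work with the canonical alternating net \(w_\varepsilon=(ab)^{N(\varepsilon)}\) and the factorial log-mould \(\Lambda_{\mathfrak{F}}(w)=\sum_k \varepsilon_k\log(k!)\). First I would check that \(\widehat{F}^{\mathfrak{F}}_\varepsilon\) descends to \(\widetilde{\Si}^*\). It depends only on the first \(N_0(\varepsilon)+1\) symbols, so the logarithmic-window criterion of Section~\ref{sec:first_order} applies. Moderateness of the resulting complex net is clear, since \(|\Lambda_{\mathfrak{F}}(u[1\ldots N_0+1])|\le (N_0+1)\log((N_0+1)!)=O(\log^2(1/\varepsilon))\). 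Subtracting the deterministic term \(\tfrac14\log N_0(\varepsilon)\) does not depend on the representative, so \(\widetilde{F}^{\mathfrak{F}}_\varepsilon\) also descends.

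Next I would establish the exact identities. From \((2k-1)!/(2k)!=1/(2k)\), induction on \(m\) gives \(\mathfrak{F}_{2m}=1/(2m)!!\). Then \(\mathfrak{F}_{2m+1}=(2m+1)!/(2m)!!=(2m+1)!!\). Multiplying yields \(\mathfrak{F}_{2m}\mathfrak{F}_{2m+1}=(2m+1)!!/(2m)!!=P_{2m+1}\), where \(P_{2m+1}=P_{2m}(2m+1)\) and \(P_{2m}=(2m)!/(2^{2m}(m!)^2)=(2m-1)!!/(2m)!!\). Since the prefix of length \(k\) of \(w_\varepsilon\) carries the first \(k\) alternating signs, we get \(\widehat{F}^{\mathfrak{F}}_\varepsilon(w_\varepsilon)=\tfrac12\log P_{2m+1}\) with \(m=\lfloor N_0/2\rfloor\), valid once \(2m+1\le 2N(\varepsilon)\).

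Then I would do the asymptotics. From the Stirling expansion already used for Theorem~\ref{thm:integers}, \(\log P_{2m+1}=\tfrac12\log m+\log 2-\tfrac12\log\pi+O(m^{-1})\). The one point needing care is replacing \(\log m\) by \(\log N_0-\log 2\). Since \(m=\lfloor N_0/2\rfloor\), we have \(N_0/2-1<m\le N_0/2\), so \(\log m=\log N_0-\log 2+O(N_0^{-1})\). Hence \(\widetilde{F}^{\mathfrak{F}}_\varepsilon(w_\varepsilon)=\tfrac14\log(2/\pi)+O(1/\log(1/\varepsilon))\).

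Finally I would pass to the ordinary number. With \(t=\log_2(1/\varepsilon)\), the quantity \(\log z_{2^{-t}}\) equals \(\tfrac14\log(2/\pi)+O(t^{-1})\), and it converges, so its logarithmic Ces\`aro mean equals the limit. This gives \(\mathfrak{F}_{\mathrm{ren}}=(2/\pi)^{1/4}=\sqrt{P_{\mathrm{ren}}}\). I expect the main obstacle to be conceptual rather than computational. One must fix the renormalization convention precisely, namely subtracting exactly \(\tfrac14\log N_0\) rather than \(\tfrac14\log m\). The other choice would shift the constant by \(\tfrac14\log 2\), so the statement is only meaningful relative to the scheme fixed above, and I would state this explicitly.
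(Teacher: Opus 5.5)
Your proposal is correct and follows the paper's own argument step for step. The shared steps are the telescoping $(2k-1)!/(2k)!=1/(2k)$ to double factorials, the Wallis identity $\mathfrak{F}_{2m}\mathfrak{F}_{2m+1}=P_{2m+1}$, the Stirling asymptotics inherited from the integer case, and the subtraction of exactly $\tfrac14\log N_0$. Your added checks (descent to the quotient, $2m+1\le 2N(\varepsilon)$, the $O(N_0^{-1})$ error in $\log m$, and the dependence of the constant on the subtraction convention) are details the paper leaves implicit.

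Two small corrections. First, $(N_0+1)\log((N_0+1)!)$ is $O(\log^2(1/\varepsilon)\,\log\log(1/\varepsilon))$, not $O(\log^2(1/\varepsilon))$; it is still moderate. Second, the closing $\Phi_{\mathrm{LC}}$ step is unnecessary, since the paper simply takes the ordinary limit of $\widetilde{F}^{\mathfrak{F}}_\varepsilon(w_\varepsilon)$. Moreover, the paper's displayed normalization $\frac1T\int_1^T(\cdot)\,\frac{dt}{t}$ would send any convergent net to $0$, so your claim that the mean ``equals the limit'' holds only with the prefactor $1/\log T$.
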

	
	Comparing with Theorem~\ref{thm:integers}, we obtain the striking identity
	\[
	\mathfrak{F}_{\mathrm{ren}} = \sqrt{P_{\mathrm{ren}}}.
	\]
	In logarithmic scale, this means that the renormalized factorial functional equals exactly one‑half of the renormalized integer functional: \(\widetilde{F}^{\mathfrak{F}} = \frac12 \widetilde{F}^{\text{int}}\).
	The square root arises because the symmetrized functional takes the geometric mean of the even and odd factorial branches, while the Wallis ratio connects their product to a single integer branch. This symmetry is exact and unexpected; it illustrates how the algebraic framework reveals structural relationships that are obscured in the classical analytic approach.
	
	\textbf{The emerging pattern.}
	The three examples follow a common method: encode the alternating signs in the canonical net, define a log-mould adapted to the product, apply the symmetrized functional to cancel the leading oscillation, subtract any remaining divergence if present, and extract the constant via the logarithmic Ces\`aro mean. The divergence type reflects the growth rate of the terms: polynomials in \(n\) give logarithmic divergences in \(N_0\); the logarithmic density of primes adds an extra \(\log\log\) layer; the factorial collapse reduces the divergence order and unveils a hidden symmetry. In each case, the mould evaluated on the generalized alternating word yields a generalized number whose regularized value is either rigorously equal to the known zeta‑regularized product (for integers and factorials) or conjecturally equal to a natural constant (for primes). The generalized monoid provides the unified algebraic setting where all these regularizations can be performed with the same tools.
	
	\textbf{An oscillating product with multiple limit points.}
	This product, and those that follow, are also treated within the monoid. They illustrate the flexibility of the method.
	
	Consider the sign sequence defined by blocks of doubling length:
	\[
	\chi(n) = (-1)^{\lfloor \log_2 n \rfloor}, \qquad n\in\NN.
	\]
	The first values are \(+1\) for \(n=1\); \(-1\) for \(n=2,3\); \(+1\) for \(n=4,5,6,7\); \(-1\) for \(n=8,\ldots,15\); and so on. The sign is constant on dyadic blocks of length \(2^{k}\) for \(k\ge 0\), alternating between \(+1\) and \(-1\) from one block to the next. Now form the infinite product
	\[
	Q = \prod_{n=1}^{\infty} \Bigl(1 + \frac{\chi(n)}{n}\Bigr).
	\]
	Its partial products \(Q_N = \prod_{n=1}^{N}(1 + \chi(n)/n)\) behave as follows: at the end of a block where \(\chi=+1\), the product accumulates positive factors and grows; at the end of a block where \(\chi=-1\), it accumulates negative factors and shrinks. Taking logarithms and expanding for large \(n\),
	\[
	\log Q_N = \sum_{n=1}^{N} \frac{\chi(n)}{n} - \frac12 \sum_{n=1}^{N} \frac{1}{n^{2}} + O(1).
	\]
	The second sum converges to \(\pi^{2}/12\). The first sum, \(T_N = \sum_{n=1}^{N} \chi(n)/n\), is the critical term. By grouping terms within each dyadic block,
	\[
	\sum_{n=2^{k}}^{2^{k+1}-1} \frac{1}{n} = \log 2 + O(2^{-k}),
	\]
	so the contribution of the \(k\)-th block is \((-1)^{k}\log 2 + O(2^{-k})\). Summing over blocks,
	\[
	T_N = \log 2 \sum_{k=0}^{\lfloor \log_2 N\rfloor} (-1)^{k} + O(1).
	\]
	The alternating sum of \(\log 2\) oscillates between \(0\) and \(\log 2\) (or between \(\log 2\) and \(0\), depending on the parity of the last block). Consequently, the sequence \((T_N)\) does not converge: it has exactly two limit points, \(0\) and \(\log 2\). The partial products \(Q_N\) therefore oscillate between \(\exp(-\pi^{2}/12)\) and \(\exp(\log 2 - \pi^{2}/12) = 2e^{-\pi^{2}/12}\). Classical Ces\`aro summation fails to assign a unique limit because the oscillations are not of fixed period; the block lengths grow exponentially, and the ordinary Ces\`aro mean does not converge. Zeta regularization is also inapplicable, as the product is conditionally convergent in the sense of block summation but not in the sense of analytic continuation (see details in Appendix \ref{app:classical_failure}).
	
	The generalized monoid handles this product without difficulty. Encode the sign \(\chi(n)\) as a word over \(\Si=\{a,b\}\) (with \(a=+1\), \(b=-1\)) and form the canonical net
	\[
	w_\varepsilon = \text{codification of } \chi(1),\chi(2),\ldots,\chi(\lfloor\varepsilon^{-1}\rfloor).
	\]
	This net is moderate. The logarithmic window \(N_0(\varepsilon)=\lfloor\log_2(1/\varepsilon)\rfloor\) selects a prefix that ends approximately at the boundary of a dyadic block. Define the log-mould \(\Lambda(w) = \sum_{k=1}^{|w|} \varepsilon_k \log(1 + 1/k)\), where \(\varepsilon_k\) is the sign encoded by the \(k\)-th letter. Since \(\log(1 + 1/k) = 1/k + O(1/k^{2})\), this mould captures the essential oscillation.
	
	The evaluation functional is the sum over the logarithmic window:
	\[
	\mathcal{F}_\varepsilon(w_\varepsilon) = \sum_{k=2}^{N_0(\varepsilon)} \frac{\Lambda(w_\varepsilon[1\ldots k])}{k}.
	\]
	The weight \(1/k\) (instead of \(1/k^2\)) is chosen because the divergence of \(T_N\) is bounded, not logarithmic; the exponent is dictated by the abscissa of convergence of the Dirichlet series \(\sum \chi(n)/n^s\), which is \(\sigma_1=0\), giving \(\alpha=\sigma_1+1=1\).
	
	As \(\varepsilon\to0\), the logarithmic window sweeps through the dyadic blocks. The residue \(\mathcal{F}_\varepsilon(w_\varepsilon)\) oscillates between two values corresponding to the two limit points of \(T_N\). The symmetrized functional is not needed here; instead, the logarithmic Ces\`aro mean directly averages the oscillations on the logarithmic time scale \(t=\log_2(1/\varepsilon)\). Because the dyadic blocks are of equal length on this scale (each corresponds to an interval of length \(1\) in \(t\)), the two limit points are visited with equal frequency. The Ces\`aro mean therefore yields the arithmetic average:
	\[
	\Phi_{\mathrm{LC}}(z) = \frac{1}{2}\Bigl(0 + \log 2\Bigr) - \frac{\pi^{2}}{12} = \frac12\log 2 - \frac{\pi^{2}}{12}.
	\]
	Exponentiating, the regularized value of the product is
	\[
	Q_{\mathrm{ren}} = \sqrt{2}\; e^{-\pi^{2}/12}.
	\]
	
	\begin{theorem}\label{thm:dyadic}
		The regularized value of the oscillating product with dyadic blocks is
		\[
		Q_{\mathrm{ren}} = \sqrt{2}\; e^{-\pi^{2}/12}.
		\]
	\end{theorem}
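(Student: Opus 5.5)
The plan is to follow the scheme of Theorem~\ref{thm:integers}. First I identify the generalized number produced by the mould on the canonical net. Then I reduce its logarithm to an explicit function of the truncation length. Finally I compute the logarithmic Ces\`aro mean by averaging over dyadic blocks.

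\textbf{Step 1: invariance.} Since $\mathcal{F}_\varepsilon$ and the truncated evaluation $\Lambda(w_\varepsilon[1\ldots N_0])$ depend only on the first $N_0(\varepsilon)+1$ letters, the criterion of Section~\ref{sec:first_order} shows that the resulting complex net depends only on the class $[(w_\varepsilon)]$. Moreover $\Lambda(w_\varepsilon[1\ldots N])=\log Q_N$ exactly, because the $k$-th letter encodes $\chi(k)$ for $k\le N(\varepsilon)$. So it suffices to study the net $z_\varepsilon$ with $\log z_\varepsilon=\log Q_{N}$ along the logarithmic window. This reduction is routine.

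\textbf{Step 2: decomposition of $\log Q_N$.} Write $\log(1+\chi(n)/n)=\chi(n)/n-\tfrac1{2n^2}+r_n$ with $r_n=O(n^{-3})$, so that
\[
\log Q_N=T_N-\tfrac12\sum_{n\le N}n^{-2}+\sum_{n\le N}r_n .
\]
The second sum tends to $\pi^2/6$, giving the $\pi^2/12$ term. The third sum converges absolutely and can be absorbed. (I must check that this does not shift the constant; if it does, the honest statement absorbs it, but the target is the stated value.) For $T_N$ with $2^K\le N<2^{K+1}$, use the exact block sums $\sum_{2^k\le n<2^{k+1}}1/n=\log 2+\delta_k$, where $\delta_k=O(2^{-k})$. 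This gives
\[
T_N=\log 2\sum_{k<K}(-1)^k+\sum_{k<K}(-1)^k\delta_k+(-1)^K\bigl(\log(N/2^K)+O(2^{-K})\bigr).
\]
The middle sum converges. The last term interpolates linearly in $\log N$ between the two extreme values $0$ and $\log 2$.

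\textbf{Step 3: averaging.} Here the averaging must be performed on the logarithmic scale in which each dyadic block has unit length, as the paper indicates: the block index is $\log_2 N$. On each block of unit length $\log Q_N$ runs linearly between the two limit points, increasing on even blocks and decreasing on odd ones. Its average over one period of two blocks is therefore $\tfrac12(0+\log 2)$ plus the convergent constants. The Ces\`aro mean over $T$ periods converges to this per-period average with error $O(1/T)$.

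\textbf{Main obstacle.} I expect the hard part to be twofold. First, the normalization of $\Phi_{\mathrm{LC}}$ must be reconciled with the equal-weight-per-block average. Read literally with the extra factor $1/t$, a bounded integrand gives limit $0$. So I would first fix the interpretation of $\Phi_{\mathrm{LC}}$ as the mean over the logarithmic time in which dyadic blocks have equal length. Second, the convergent correction terms $\sum(-1)^k\delta_k$ and $\sum r_n$ must be controlled. I would try to evaluate them in closed form through the digamma differences $\psi(2^{k+1})-\psi(2^k)-\log 2$. Failing that, I would try to show that they cancel against the intra-block linear averaging, so that the constant is exactly $\tfrac12\log 2-\pi^2/12$. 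Exponentiating then gives $Q_{\mathrm{ren}}=\sqrt2\,e^{-\pi^2/12}$.
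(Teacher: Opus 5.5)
Your outline follows the paper's own argument: equal-weight averaging of the two extreme values on the dyadic logarithmic scale. The step you flagged as the main obstacle is where it breaks, and it cannot be repaired. The convergent corrections $\sum_n r_n$ and $\sum_k(-1)^k\delta_k$ do not vanish. The intra-block averaging contributes exactly $\tfrac12\log 2$ in the limit, so there is nothing for them to cancel against. You can see this without digamma functions, because the block products telescope:
\[
\prod_{n=2^k}^{2^{k+1}-1}\Bigl(1+\frac1n\Bigr)=2,
\qquad
\prod_{n=2^k}^{2^{k+1}-1}\Bigl(1-\frac1n\Bigr)=\frac{2^k-1}{2^{k+1}-1}\quad(k\ge1).
\]
Hence $Q_{4^j-1}=\prod_{i=1}^{j}\frac{4^i-2}{4^i-1}\to L:=\prod_{i\ge1}\frac{4^i-2}{4^i-1}\approx0.6092$ and $Q_{2\cdot4^j-1}=2Q_{4^j-1}\to2L$; for instance $Q_{15}=28/45$ and $Q_{63}=248/405\approx0.612$. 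Inside a block, $Q_N=Q_{2^K-1}\,(N+1)/2^K$ for $K$ even and $Q_N=Q_{2^K-1}\,(2^K-1)/N$ for $K$ odd. So $\log Q_N$ is, up to $O(2^{-K})$, linear in $\log_2N$ between $\log L$ and $\log 2L$, and its equal-weight block average is $\log L+\tfrac12\log2$.

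Your procedure therefore yields $\sqrt2\,L\approx0.8615$, not $\sqrt2\,e^{-\pi^2/12}\approx0.6213$. Equivalently, $\sum_n r_n+\sum_k(-1)^k\delta_k=\log L+\pi^2/12\approx0.327\neq0$. Already $r_1=\log2-\tfrac12\approx0.193$, because the Taylor expansion of $\log(1+x)$ is being used at $x=1$. The paper reaches the stated value only by writing these contributions as $O(1)$ and then discarding them. For the same reason its claimed limit points $e^{-\pi^2/12}\approx0.439$ and $2e^{-\pi^2/12}$ are wrong. The ``honest statement'' you mention is not a fallback; it is the only thing this argument can prove.

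Step~1 also needs two repairs. First, with the paper's mould $\Lambda(w)=\sum_k\varepsilon_k\log(1+1/k)$, the identity $\Lambda(w_\varepsilon[1\ldots N])=\log Q_N$ is false, since $-\log(1+1/n)\neq\log(1-1/n)$. That mould computes $\log\prod_{n\le N}(1+1/n)^{\chi(n)}$, whose block products telescope to exactly $2^{\pm1}$. It therefore averages to exactly $\tfrac12\log2$, with no $\pi^2/12$ term at all. To study $Q$ you need $\Lambda(w)=\sum_k\log(1+\varepsilon_k/k)$.

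Second, you are right that $\Phi_{\mathrm{LC}}$ as written sends every bounded net to $0$, but removing the factor $1/t$ is not enough. If the net is read at $N=N_0(\varepsilon)=\lfloor t\rfloor$, the block $[2^k,2^{k+1})$ occupies a $t$-interval of length $2^k$, not $1$. The uniform mean in $t$ is then the ordinary Ces\`aro mean in $N$, which oscillates, as Appendix~\ref{app:classical_failure} itself says. Equal weight per block requires the logarithmic mean $\lim_{X\to\infty}\frac{1}{\log X}\sum_{N\le X}\frac{\log Q_N}{N}$. This is what the weight $1/k$ in $\mathcal F_\varepsilon$ produces after division by $\log N_0$. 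With that definition, your Steps~2--3, with the constants kept, give $Q_{\mathrm{ren}}=\sqrt2\prod_{i\ge1}\frac{4^i-2}{4^i-1}$.
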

	
	This value is canonical: it is the unique shift-invariant average of the two limit points on the logarithmic scale, a property forced by the ultrametric equivalence on words. No classical method yields this value uniquely. The generalized monoid resolves the ambiguity by treating the whole product as a single generalized word and by employing a scale-adapted renormalization that respects the dyadic block structure.
	
	\textbf{Oscillating product over primes with dyadic blocks.}
	Again, only the monoid is needed; the logarithmic Ces\`aro mean and the canonical net suffice.
	
	The previous example admits a natural analogue over primes. Using the same block sign sequence \(\chi(n)=(-1)^{\lfloor \log_2 n \rfloor}\), define
	\[
	Q_{\mathfrak{P}} = \prod_{n=1}^{\infty} \Bigl(1 + \frac{\chi(p_n)}{p_n}\Bigr),
	\]
	where \(p_n\) is the \(n\)-th prime. The sign \(\chi(p_n)\) is constant on dyadic blocks of the index \(n\): for \(2^{k} \le n < 2^{k+1}\), the sign is \((-1)^{k}\). Thus the product alternates sign on blocks of primes of exponentially growing size.
	
	As in the integer case, the partial products have multiple limit points. Taking logarithms and expanding for large \(p_n\),
	\[
	\log Q_{\mathfrak{P},N} = \sum_{n=1}^{N} \frac{\chi(p_n)}{p_n} - \frac12 \sum_{n=1}^{N} \frac{1}{p_n^{2}} + O(1).
	\]
	The second sum converges absolutely to a constant \(C_2 < \infty\). The first sum is the critical term.
	
	By the Prime Number Theorem, the number of primes in the \(k\)-th dyadic block \([2^{k}, 2^{k+1})\) is asymptotic to \(2^{k} / (k\log 2)\). Within this block, each prime \(p\) satisfies \(\log p \sim k\log 2\), and by Mertens' first theorem \cite{Tenenbaum2015},
	\[
	\sum_{p \in [2^{k}, 2^{k+1})} \frac{1}{p} = \log\frac{k+1}{k} + o(1) = \frac{1}{k} + o(1).
	\]
	Thus the contribution of the \(k\)-th block to the sum \(\sum \chi(p_n)/p_n\) is \((-1)^{k}/k + o(1/k)\). The series \(\sum (-1)^{k}/k\) converges conditionally to \(-\log 2\) (or \(+\log 2\), depending on the starting index). Consequently, the sequence of partial sums oscillates and does not converge to a single limit.
	
	Classical Ces\`aro summation does not resolve the ambiguity because the oscillations are not of fixed period. Zeta regularization is inapplicable because the sign sequence is not multiplicative. The generalized monoid provides a canonical value (see details in Appendix \ref{app:dyadic_prime_sum}).
	
	Encode the sign \(\chi(p_n)\) as a word over \(\Si=\{a,b\}\) (with \(a=+1\), \(b=-1\)) and form the canonical net over primes:
	\[
	w_\varepsilon = \text{codification of } \chi(p_1),\chi(p_2),\ldots,\chi(p_{\lfloor\varepsilon^{-1}\rfloor}).
	\]
	The log-mould is \(\Lambda_{\mathfrak{P},\chi}(w) = \sum_{k=1}^{|w|} \varepsilon_k \log(1 + 1/p_k)\), where \(\varepsilon_k\) is the sign of the \(k\)-th letter. Since \(\log(1+1/p_k) = 1/p_k + O(1/p_k^{2})\), the essential oscillation is captured by the sum of \(\varepsilon_k/p_k\).
	
	The evaluation functional uses the logarithmic window \(N_0(\varepsilon)=\lfloor\log_2(1/\varepsilon)\rfloor\) and weight \(1/k\):
	\[
	\mathcal{F}_\varepsilon(w_\varepsilon) = \sum_{k=2}^{N_0(\varepsilon)} \frac{\Lambda_{\mathfrak{P},\chi}(w_\varepsilon[1\ldots k])}{k}.
	\]
	The weight \(1/k\) (rather than \(1/k^2\)) is chosen because the divergence of the critical sum is bounded, not logarithmic.
	
	As \(\varepsilon\to0\), the logarithmic window sweeps through the dyadic blocks. On the logarithmic time scale \(t=\log_2(1/\varepsilon)\), each block corresponds to an interval of length \(1\). The logarithmic Ces\`aro mean averages the contributions of the blocks with equal weight, yielding the conditionally convergent value of the alternating harmonic-like series. The regularized value is
	\[
	Q_{\mathfrak{P},\mathrm{ren}} = \exp\!\Bigl( -\log 2 - \frac12 C_2 \Bigr) = \frac12\,e^{-C_2/2},
	\]
	where \(C_2 = \sum_{p} 1/p^{2} \approx 0.452247\). Numerically, \(Q_{\mathfrak{P},\mathrm{ren}} \approx 0.398\).
	\begin{theorem}\label{thm:dyadic_primes}
		The regularized value of the oscillating product over primes with dyadic blocks is
		\[
		Q_{\mathfrak{P},\mathrm{ren}} = \exp\!\Bigl( -\log 2 - \frac12 C_2 \Bigr) = \frac12\,e^{-C_2/2},
		\]
		where \(C_2 = \sum_{p} 1/p^{2} \approx 0.452247\). Numerically, \(Q_{\mathfrak{P},\mathrm{ren}} \approx 0.398\).
	\end{theorem}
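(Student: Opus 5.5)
The plan is to follow the template of Theorem~\ref{thm:dyadic}: split the log-mould into a sign-sensitive critical part and a sign-blind absolutely convergent part, analyse the critical part block by block on the dyadic index scale, and then pass the result through the window functional \(\mathcal{F}_\varepsilon\) and the logarithmic Ces\`aro mean \(\Phi_{\mathrm{LC}}\). First, write \(\log(1+\varepsilon_k/p_k)=\varepsilon_k/p_k-\tfrac12 p_k^{-2}+O(p_k^{-3})\); note that the quadratic term does not depend on the sign. Summing over \(k\le N\), the quadratic part tends to \(-\tfrac12 C_2\). The cubic remainder is absolutely summable, so it contributes a further constant \(R_3\), which I will have to track. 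The claimed formula tacitly sets \(R_3=0\), or else absorbs it into the definition of the principal part.

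Second, analyse the critical sum \(S_N=\sum_{n\le N}\chi(p_n)/p_n\) on index blocks \(I_k=[2^k,2^{k+1})\). By the Prime Number Theorem in the form \(p_n=n\log n\,(1+O(\log\log n/\log n))\), each block contributes
\[
\sum_{n\in I_k}\frac{1}{p_n}=\sum_{n\in I_k}\frac{1}{n\log n}\bigl(1+o(1)\bigr)=\frac{1}{k}+o\Bigl(\frac1k\Bigr),
\]
using \(\sum_{n\in I_k}1/(n\log n)=\log\frac{\log 2^{k+1}}{\log 2^k}+O(2^{-k})\). This index-based computation is the correct one; the Mertens-type sum over primes in \([2^k,2^{k+1})\) quoted above is a different sum. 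The total variation of \(S_N\) inside \(I_k\) is \(O(1/k)\to0\). Together with the convergence of \(\sum_k(-1)^k/k\), this should show that \(S_N\) actually converges, provided the \(o(1/k)\) errors are summable or at least alternate in a controlled way. I would try to obtain this from the explicit error term in the PNT, which gives block errors \(O(\log k/k^2)\), and these are summable. Once \(S_N\to S\) is established, the windowed net \(\Lambda_{\mathfrak{P},\chi}(w_\varepsilon[1\ldots N_0])\) converges. Its weighted sum \(\mathcal{F}_\varepsilon\) with weight \(1/k\) then behaves like \(\log N_0\) times the limit, and after normalization the logarithmic Ces\`aro mean returns exactly that limit. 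This last step is routine Ces\`aro/Abelian reasoning: a convergent net is fixed by \(\Phi_{\mathrm{LC}}\). Invariance under \(\sim\) follows from the logarithmic-window criterion of Section~\ref{sec:first_order}.

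The main obstacle is identifying the constant. The limit is \(S=\sum_k(-1)^k\bigl(1/k+\delta_k\bigr)\) plus the contribution of the initial block \(k=0\), where \(\delta_k\) are the PNT error terms. This equals \(-\log 2\) only if the corrections \(\delta_k\) and the \(k=0\) term cancel exactly, and nothing in the elementary estimates forces that. My first attempt would be to make the regularization select the \emph{principal part} of each block: replace each block sum by its leading term \((-1)^k/k\) (the analogue of subtracting principal parts in the prime conjecture). I would then prove that the functional, renormalized this way, yields \(-\log 2-\tfrac12 C_2\) exactly. If instead the statement is meant as the genuine limit, I expect the identification to require numerical verification, as in Conjecture~\ref{conj:primes}. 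In that case I would check whether \(\sum_k(-1)^k\delta_k\), the \(k=0\) term and \(R_3\) together vanish, and I suspect they do not.
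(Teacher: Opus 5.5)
Your analysis is sound, and it is more careful than the argument the paper actually gives. It does not end in a proof of the displayed value, however, and the step you flag as the main obstacle cannot be closed.

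The paper's proof is exactly your ``principal part'' computation, asserted rather than derived. It proceeds in four steps, each defective:
\begin{enumerate}[label=(\roman*)]
\item It expands $\log(1+x)$ to second order and silently drops the remainder (your $R_3$).
\item It estimates the block sums by Mertens' theorem over primes in the value interval $[2^k,2^{k+1})$, although the sign is defined on index blocks. Moreover, Mertens' error $O(1/\log x)$ yields only $\log(1+1/k)+O(1/k)$, not $1/k+o(1/k)$.
\item It asserts that the partial sums oscillate.
\item It asserts that the logarithmic Ces\`aro mean weights all blocks equally, and so returns $\sum_{k\ge1}(-1)^k/k=-\log 2$. This is false: with $N_0\sim t$, index block $k$ occupies $t\in[2^k,2^{k+1})$. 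The $k=0$ block and all $\delta_k$ are also discarded.
\end{enumerate}
Your total-variation argument, with block errors $O(\log k/k^2)$, refutes the oscillation claim: $\log Q_{\mathfrak P,N}$ converges classically. Hence any averaging that fixes convergent nets returns the genuine limit $\tfrac12-\log2+\sum_{k\ge1}(-1)^k\delta_k-\tfrac12C_2+R_3$. (That $\Phi_{\mathrm{LC}}$ fixes convergent nets holds for the mean $\frac1T\int_1^T\log|z_{2^{-t}}|\,dt$ the paper evidently intends. Its displayed formula carries an extra factor $1/t$ and would send every convergent net to $0$.)

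Your suspicion that the extra terms do not cancel is right, so this is a defect of the statement rather than of your argument. In the index-block reading, the partial products at block ends are exactly $\tfrac32$, $\tfrac45$, $\tfrac{13824}{12155}\approx1.137,\dots$. The block sums fall well short of $1/k$ ($s_1=\tfrac8{15}$, $s_2\approx0.370$, $s_3\approx0.259$). The products settle near $1$, not near $\tfrac12e^{-C_2/2}\approx0.399$. The value-block reading also converges, to roughly $0.44$.

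The fallback does not help either. Replacing each block by its principal part is a stipulation, not a theorem, and it is not even canonical. The leading term your own computation produces is $\log\frac{\log2^{k+1}}{\log2^k}=\log(1+1/k)$, and by Wallis $\sum_{k\ge1}(-1)^k\log(1+1/k)=\log(2/\pi)\ne-\log2$. Changing the representative by a summable $O(1/k^2)$ term generally shifts the alternating sum. So the value $-\log2-\tfrac12C_2$ arises only by choosing exactly $(-1)^k/k$ from $k=1$ on and setting $R_3=0$ by fiat, which is what the paper tacitly does.
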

	
	This example provides an instance of a prime-indexed product whose partial products oscillate among several limit points, a phenomenon inaccessible to classical regularization. The value obtained by the generalized monoid is canonical: it corresponds to the Abel sum of the alternating block series, which is the unique shift-invariant average on the logarithmic scale. No classical method yields this value.
	
	\textbf{The Thue--Morse product: a conjecture.}
	The previous examples are either tractable by classical methods or yield regularized values via the monoid construction. We now present a product that lies genuinely beyond the reach of current techniques, and for which the generalized monoid suggests a striking conjectural value.
	
	The Thue--Morse sequence \(t:\NN\to\{\pm 1\}\) is defined by \(t(n)=(-1)^{s_2(n)}\), where \(s_2(n)\) is the sum of the binary digits of \(n\). Its first values are \(-1,-1,+1,-1,+1,+1,-1,-1,\ldots\). The sequence is automatic, non-periodic, and widely studied \cite{AlloucheShallit2003}. Consider the infinite product
	\[
	R = \prod_{n=1}^{\infty} n^{t(n)}.
	\]
	
	\textbf{Classical methods fail.}
	The partial products do not converge classically because the logarithmic sum grows like a fractional power of \(N\) \cite{Gelfond1968}. The Thue--Morse sequence is not Ces\`aro summable \cite{Mahler1929, Gelfond1968}. Although the Thue--Morse Dirichlet series admits a meromorphic continuation because the sequence is \(2\)-automatic \cite{AlloucheShallit2003}, no closed form for this continuation is known, so the zeta-regularized value cannot be computed explicitly. The Borel transform (see Appendix~\ref{app:classical_failure_thue_morse}) does not yield a finite value in the standard sense. A detailed verification is given in Appendix~\ref{app:classical_failure_thue_morse}.
	
	\textbf{Evaluation via the generalized monoid.}
	Encode the Thue--Morse sequence on the alphabet \(\Si=\{a,b\}\) (with \(a=+1\), \(b=-1\)) and form the canonical net
	\[
	w_\varepsilon = \text{codification of } t(1),t(2),\ldots,t(\lfloor\varepsilon^{-1}\rfloor).
	\]
	This net is moderate. The logarithmic window \(N_0(\varepsilon)=\lfloor\log_2(1/\varepsilon)\rfloor\) selects a prefix of length \(N_0\).
	
	Define the log-mould \(\Lambda_t(w) = \sum_{k=1}^{|w|} \varepsilon_k \log k\). The evaluation functional, with weight \(1/k^2\), converges absolutely to a constant \(F\) (see Appendix~\ref{app:thue_morse}). Extensive numerical computation of the first \(10^6\) terms indicates that
	\[
	F = \frac12\log\frac{2}{\pi},
	\]
	to high precision. 
	
	No canonical value for the Thue--Morse product is claimed here.
	The quantity \(\exp(\mathcal F)\) is an auxiliary value, distinct
	from the zeta-regularized value \(\exp(-D_{\mathrm{TM}}'(0))\).
	
	\textbf{What the monoid adds.}
	The first three examples --- alternating integers, a conjecture for primes, and factorials --- are tests of consistency: the monoid recovers values already known from zeta regularization (for integers and factorials) and suggests a natural constant for primes. The dyadic block products, inaccessible to classical regularization, acquire canonical finite values. The Thue--Morse product, the most challenging example, points to a deep conjecture that connects automatic sequences and arithmetic regularization. All these regularizations (and the conjectures) are achieved within the monoid \(\widetilde{\Si}^*\).
	
	\section{Regularization of the Chebyshev product}
	\label{sec:chebyshev}
	
	We illustrate the method on a sum that genuinely diverges logarithmically and whose regularized constant is tied to the Riemann zeta function.  
	Let \(\Lambda(n)\) be the von Mangoldt function and \(\psi(x)=\sum_{n\le x}\Lambda(n)\) the Chebyshev function.  
	By Möbius inversion,
	\begin{equation}\label{eq:psi_moebius}
		\Lambda(n) = -\sum_{d\mid n} \mu(d)\log d ,
	\end{equation}
	hence
	\begin{equation}\label{eq:psi_sum}
		\psi(k) = \sum_{n\le k}\Lambda(n)
		= -\sum_{d\le k} \mu(d)\log d \Big\lfloor \frac{k}{d}\Big\rfloor .
	\end{equation}
	
	We encode the Möbius values \(\mu(1),\dots,\mu(\lfloor\varepsilon^{-1}\rfloor)\) by a net
	\(w_\varepsilon\) over the alphabet \(\Si=\{a,b,c\}\), where the letters \(a,b,c\) correspond to
	\(\mu(d)=+1,-1,0\).  
	Define the log‑mould
	\[
	\Lambda_\psi(u) = -\sum_{d=1}^{|u|} \delta_d\,\log d \Big\lfloor \frac{|u|}{d}\Big\rfloor ,
	\qquad u\in\Si^*,
	\]
	where \(\delta_d\) is the signed symbol representing \(\mu(d)\).  
	For a prefix \(u = w_\varepsilon[1\ldots k]\) we obtain exactly \(\Lambda_\psi(u)=\psi(k)\).
	
	The evaluation functional uses the logarithmic window \(N_0(\varepsilon)=\lfloor\log_2(1/\varepsilon)\rfloor\):
	\[
	\mathcal{F}_\varepsilon(w_\varepsilon) = \sum_{k=2}^{N_0(\varepsilon)} \frac{\psi(k)}{k^2}.
	\]
	Because \(\mathcal{F}_\varepsilon\) depends only on the first \(N_0(\varepsilon)\) symbols of \(w_\varepsilon\), it descends to the quotient \(\widetilde{\Si}^*\).
	
	The Prime Number Theorem with the de la Vallée Poussin error term \cite{Tenenbaum2015} gives, unconditionally,
	\[
	\psi(k) = k + O\!\big( k\,e^{-c\sqrt{\log k}} \big) .
	\]
	Thus
	\[
	\frac{\psi(k)}{k^2} = \frac{1}{k} + O\!\Big( \frac{e^{-c\sqrt{\log k}}}{k} \Big) .
	\]
	The error series converges absolutely because \(\sum_{k} e^{-c\sqrt{\log k}}/k <\infty\).  
	Hence
	\[
	\mathcal{F}_\varepsilon(w_\varepsilon) = \sum_{k=2}^{N_0} \frac{1}{k}
	+ \sum_{k=2}^{N_0} \frac{\psi(k)-k}{k^2}
	= \log N_0(\varepsilon) + \gamma - 1 + C_0 + o(1),
	\]
	where \(\gamma\) is Euler's constant and
	\[
	C_0 = \sum_{k=2}^{\infty} \frac{\psi(k)-k}{k^2}
	\]
	is a finite, absolutely convergent constant.
	
	Subtracting the principal part \(\log N_0(\varepsilon)\) yields a convergent residue:
	\[
	\mathcal{R}_\varepsilon = \mathcal{F}_\varepsilon(w_\varepsilon) - \log N_0(\varepsilon)
	\;\xrightarrow{\;\varepsilon\to0\;}\; K,
	\qquad
	K = \gamma - 1 + \sum_{k=2}^{\infty} \frac{\psi(k)-k}{k^2}.
	\]
	
The constant \(K\) can be related to the Laurent expansion of \(-\zeta'(s)/\zeta(s)\) at \(s=1\).  
Indeed, the Mellin transform identity \cite{Titchmarsh1986}
\[
\int_{1}^{\infty} \frac{\psi(t)-t}{t^{s+1}}\,dt = -\frac{\zeta'(s)}{s\,\zeta(s)} - \frac{1}{s-1}
\]
holds for \(\Re(s)>1\) and continues analytically to a neighborhood of \(s=1\).  
The finite part at \(s=1\) gives an alternative expression for \(K\):
\[
K = \operatorname{FP}_{s=1}\Bigl( -\frac{\zeta'(s)}{s\,\zeta(s)} - \frac{1}{s-1} \Bigr) + \gamma - 1 ,
\]
where \(\operatorname{FP}_{s=1}\) denotes the finite part (constant term of the Laurent expansion) at \(s=1\). This expression is well defined because \(-\zeta'(s)/\zeta(s)\) has a simple pole at \(s=1\) with residue \(1\), so the difference \(-\zeta'(s)/(s\zeta(s)) - 1/(s-1)\) is holomorphic near \(s=1\).

	This example illustrates how the generalized monoid naturally isolates a finite \emph{regularized constant}.  
	In the spirit of mould calculus, one can view the subtraction of the principal part \(\log N_0(\varepsilon)\) as an algebraic residue operator
	\[
	\operatorname{Reg}\bigl(\mathcal{F}_\varepsilon(w_\varepsilon)\bigr) \;=\; \lim_{\varepsilon\to0^+}\Bigl( \mathcal{F}_\varepsilon(w_\varepsilon) - \log N_0(\varepsilon) \Bigr),
	\]
	which assigns a well‑defined complex number to any net whose divergence is exactly logarithmic.  
	The unconditional convergence of the residue is a consequence of the exponential error term in the Prime Number Theorem.
	
	\section{Selection of the evaluation functional and the renormalization scheme}
	\label{sec:selection}
	
	The generalized monoid \(\widetilde{\Si}^*\) provides a flexible algebraic framework for regularizing divergent arithmetic products. However, the framework does not prescribe a unique evaluation functional \(\mathcal{F}_\varepsilon\) or a unique renormalization scheme \(\Phi\). Rather, these must be selected according to the specific arithmetic problem at hand. In this section we systematize the principles governing these choices, drawing on the examples treated in Section~\ref{sec:applications}. The selection process reveals a deep interplay between the growth rate of the arithmetic sequence, the abscissa of convergence of its Dirichlet series, and the resulting divergence type.
	
	\textbf{The evaluation functional.}
	The evaluation functional extracts a numerical value from the logarithmic prefix of a generalized word. Its general form for an arithmetic product \(\prod q_n^{a(n)}\) is a weighted sum of the log-mould evaluations:
	\[
	\mathcal{F}_\varepsilon(u) = \sum_{k=2}^{N_0(\varepsilon)} \frac{\Lambda(u[1\ldots k])}{k^{\alpha}},
	\]
	where \(\Lambda(w)=\sum_{k=1}^{|w|} \delta_k \log q_k\) and \(\delta_k\) encodes \(a(k)\). The exponent \(\alpha\) is the crucial parameter. It must be chosen so that the sum diverges at the same rate as the regularized product, allowing the constant term to be extracted. The choice of \(\alpha\) is guided by the Mellin transform of the summatory function \(A_{\log}(t)=\sum_{n\le t} a(n)\log q_n\). Formally,
	\[
	\int_{1}^{\infty} \frac{A_{\log}(t)}{t^{s+1}}\,dt = \frac{D'(s)}{s},
	\]
	where \(D(s)\) is the Dirichlet series associated to \(a(n)\log q_n\). If the abscissa of absolute convergence of \(D(s)\) is \(\sigma_1\),
	then a natural heuristic choice is
	\[
	\alpha = \sigma_1 + 1.
	\]
	 This places the discrete sum at the boundary of convergence, where the divergence is logarithmic.
	
	For the alternating products of Section~\ref{sec:applications}, the Dirichlet series \(\sum (-1)^{n+1}\log q_n / n^s\) converges absolutely for \(\Re(s)>1\), so \(\sigma_1=1\) and \(\alpha=2\). This explains the weight \(1/k^2\) used throughout. For products where the terms grow faster, \(\sigma_1\) is larger and \(\alpha\) must be increased accordingly.
	
	\textbf{The scale function \(\phi\).}
	The length of the canonical net also admits flexibility. In Section~\ref{sec:applications} we took \(N(\varepsilon)=\lfloor\varepsilon^{-1}\rfloor\), corresponding to the scale function \(\phi(\varepsilon)=\varepsilon\). More generally, one can define
	\[
	w_\varepsilon = \text{codification of } a(1),\ldots,a(N_\phi(\varepsilon)), \qquad N_\phi(\varepsilon) = \lfloor\phi(\varepsilon)^{-1}\rfloor,
	\]
	where \(\phi:(0,1]\to(0,\infty)\) is continuous, strictly decreasing, and satisfies \(\phi(\varepsilon)\to0\) as \(\varepsilon\to0^+\). The function \(\phi\) is called admissible if it meets two conditions. First, moderation: there exists \(K>0\) such that \(\phi(\varepsilon)\ge c\,\varepsilon^K\) for small \(\varepsilon\), ensuring \(N_\phi(\varepsilon)=O(\varepsilon^{-K})\). Second, logarithmic compatibility: \(\log_2(1/\phi(\varepsilon)) = o(N_\phi(\varepsilon))\), so that the logarithmic window fits comfortably inside the net. The standard choice \(\phi(\varepsilon)=\varepsilon\) is admissible with \(K=1\) and works for all cases treated in this paper. There is a duality between \(\phi\) and the exponent \(\alpha\): choosing a smaller \(N_\phi\) has an effect similar to choosing a larger \(\alpha\), as both give more weight to the early terms.  
	Because the equivalence relation \(\sim\) now controls both the prefix and the suffix via the bidirectional metric \(d_{PS}\), the logarithmic compatibility condition \(\log_2(1/\phi(\varepsilon)) = o(N_\phi(\varepsilon))\) guarantees that the suffix of the canonical net does not interfere with the prefix‑based evaluation functionals; in other words, the tail of the net is safely ignored.
	
	\textbf{Classification of divergence types.}
	Once the evaluation functional is chosen, its asymptotic behaviour as \(\varepsilon\to0\) determines the required renormalization. Setting \(t=\log_2(1/\varepsilon)\) so that \(N_0\sim t\), the divergence \(D(\varepsilon)\) is a function of \(t\). Four types appear in the examples studied.
	
	Type I is zero divergence: after symmetrization, the functional converges to a constant. The alternating product of integers falls into this category. The ordinary limit \(\lim_{\varepsilon\to0}\) suffices as renormalization.
	
	Type II is double logarithmic divergence: \(D \sim c\log\log t\). The alternating product of primes exhibits this behaviour before subtraction. One first subtracts the principal part \(c\log\log N_0\), reducing to Type I, and then applies the ordinary limit.
	
	Type III is fractional power divergence: \(D \sim c t^{\alpha}\) with \(0<\alpha<1\). The uniform Ces\`aro mean \(\frac{1}{T}\int_0^T\) is insufficient, as it would diverge like \(T^{\alpha}\). A Riesz mean of order \(\alpha\) is required:
	\[
	\Phi_{\mathrm{Riesz},\alpha}(z) = \lim_{T\to\infty} \frac{1-\alpha}{T^{1-\alpha}} \int_{0}^{T} \frac{\log|z_{2^{-t}}|}{t^{\alpha}}\,dt.
	\]
	
	Type IV is oscillatory divergence: no monotone part dominates, and the functional oscillates with non-decaying amplitude. This is the most subtle case. One must use an Abel mean (exponential damping) or a higher-order Ces\`aro mean to average out the oscillations. The dyadic block products and the Thue--Morse product exhibit this behaviour.
	
	\textbf{The scale-adapted renormalization.}
	The above recipes are unified by the following definition. Let \(\psi:(0,\infty)\to(0,\infty)\) be a positive increasing function with \(\psi(t)\to\infty\) as \(t\to\infty\), and let \(\chi(t)\) be a principal part function capturing the divergence of \(\mathcal{F}_\varepsilon\). The \(\psi\)-adapted renormalization of the net \((z_\varepsilon)\) is
	\[
	\Phi_{\psi,\chi}(z) = \lim_{T\to\infty} \frac{1}{\psi(T)} \int_{0}^{T} \bigl( \log|z_{2^{-t}}| - \chi(t) \bigr)\,\psi'(t)\,dt,
	\]
	whenever the limit exists. Taking \(\psi(T)=T\) and \(\chi(t)=0\) recovers the logarithmic Ces\`aro mean \(\Phi_{\mathrm{LC}}\). Taking \(\psi(T)=T\) and \(\chi(t)=c\log\log t\) gives the residue after subtracting the double logarithmic principal part appearing in the alternating prime product.
	
	\textbf{Algorithm for selecting \(\mathcal{F}_\varepsilon\) and \(\Phi\).}
	The following steps summarize the strategy for applying the generalized monoid to a given divergent arithmetic product \(\prod q_n^{a(n)}\). First, encode the arithmetic sequence on a suitable alphabet and select an admissible scale function \(\phi\). Second, determine the abscissa of absolute convergence \(\sigma_1\) of the Dirichlet series for \(a(n)\log q_n\) and set \(\alpha = \sigma_1 + 1\). Third, define the evaluation functional \(\mathcal{F}_\varepsilon\) with weight \(1/k^{\alpha}\). Fourth, check for regular parity oscillations; if present, apply the symmetrized functional to cancel them. Fifth, compute the asymptotic expansion of \(\mathcal{F}_\varepsilon\) using standard analytic methods (Stirling's formula, Mertens' theorems, Mellin transform). Sixth, identify the divergence type and select the appropriate renormalization scheme. Seventh, apply the renormalization to extract the regularized constant.
	
	This algorithm is a heuristic guide, not a theorem. The generalized monoid provides the algebraic setting in which the evaluation functional is well defined and the renormalization is compatible with the equivalence relation. The specific choices are dictated by the arithmetic of the sequence and the analytic properties of its Dirichlet series. In all cases studied, the resulting regularized value coincides with zeta regularization when the latter is applicable, suggesting that the algorithm selects the canonical regularization.
	
	\textbf{Examples of the algorithm.}
	For the alternating product of integers, the encoding uses \(\Si=\{a,b\}\), the scale is \(\phi(\varepsilon)=\varepsilon\), the abscissa is \(\sigma_1=1\) giving \(\alpha=2\), parity oscillation is present and removed by symmetrization, the divergence after symmetrization is Type I (zero), and the renormalization is the ordinary limit. The result is \(\sqrt{2/\pi}\).
	
	For the alternating product of primes, the encoding uses \(\Si=\{a,b\}\), the scale is \(\phi(\varepsilon)=\varepsilon\), the abscissa is \(\sigma_1=1\) giving \(\alpha=2\), parity oscillation is present and removed by symmetrization, the divergence after symmetrization is Type II (double logarithmic with \(c=1/2\)), and the renormalization is subtraction of the principal part \(\frac12\log\log N_0\). The regularized value is conjectured to be \(\sqrt{2}\,e^{-\mathfrak{M}}\) based on numerical evidence.
	
	For the dyadic block product over integers, the encoding uses \(\Si=\{a,b\}\), the scale is \(\phi(\varepsilon)=\varepsilon\), the abscissa is \(\sigma_1=0\) giving \(\alpha=1\), there is no regular parity, the divergence is Type IV (oscillatory), and the renormalization is the logarithmic Ces\`aro mean. The result is \(\sqrt{2}\,e^{-\pi^2/12}\).
	
	\textbf{Why the renormalization is not unique.}
	A given divergent functional may admit several different renormalizations, each producing a different regularized value. The choice is dictated by three requirements. First, compatibility with the monoid structure: the renormalization must be invariant under the equivalence relation \(\sim\), which forces it to depend only on the asymptotic prefix profile. Second, consistency with classical methods: in all cases where a classical regularization exists, the renormalization that emerges naturally from the monoid coincides with it. Third, adaptation to the divergence scale: different divergence rates demand different averaging methods, as captured by the scale-adapted renormalization. The generalized monoid provides a systematic way to make these choices, based on the algebraic structure of words and the asymptotic equivalence relation. The resulting values are not arbitrary; they are dictated by the requirement of compatibility with concatenation, the prefix order, and the logarithmic window.
	
	\section{Conclusion}
	\label{sec:conclusion}
	
	We have constructed a generalized monoid of words \(\widetilde{\Si}^*\) that extends the free monoid \(\Si^*\) with elements of controlled infinite length. The construction is based on three simple ingredients: the prefix metric \(d_P(u,v)=2^{-\lcp(u,v)}\), a polynomial growth condition on nets of finite words, and an asymptotic equivalence relation that identifies nets whose common prefix and common suffix grow faster than any constant multiple of \(\log(1/\varepsilon)\).  Logarithmic prefix functionals descend to well-defined maps on the quotient, providing the engine for regularization. Moulds, in the sense of \'Ecalle's resurgent analysis, fit naturally into this framework: a mould defined on \(\Si^*\) extends to a functional on \(\widetilde{\Si}^*\) whenever it depends only on logarithmic prefixes, making the generalized monoid an asymptotic completion of the index set for moulds.
	
	The monoid is sufficient to regularize divergent products whose oscillations follow a regular parity pattern. The symmetrized functional, which averages even and odd truncations of a log-mould, cancels the leading oscillatory terms, and the logarithmic Ces\`aro renormalization extracts the constant term. Applied to the alternating product of integers, this method yields \(\sqrt{2/\pi}\), in exact agreement with zeta regularization via the Dirichlet eta function. The alternating product of factorials uncovers the striking symmetry \(\mathfrak{F}_{\mathrm{ren}} = \sqrt{P_{\mathrm{ren}}}\), a consequence of the Wallis identity. For the alternating product of primes, the method provides a well‑defined regularized value that numerical computation strongly indicates to be \(\sqrt{2}\,e^{-\mathfrak{M}}\), a conjecture that invites further analytic investigation.  Numerical verifications of all regularized values obtained in this paper are collected in Appendix~\ref{app:numerical}.
	
	The method extends to products that lie beyond the reach of classical regularization. Products whose sign sequences are constant on dyadic blocks, which oscillate among several limit points and are not averaged by classical Ces\`aro means, acquire canonical values within the same framework. For the Thue--Morse product, which is inaccessible to all classical summation methods, a convergent auxiliary functional yields \(\exp(\mathcal F)\approx0.9732\), a value distinct from the zeta-regularized value \(\exp(-D_{\mathrm{TM}}'(0))\). The relationship between these two quantities remains an open problem.
	
	The selection of the evaluation functional and the renormalization scheme has been systematized. The exponent \(\alpha\) in the weight \(1/k^{\alpha}\) is dictated by the abscissa of convergence of the associated Dirichlet series. The scale function \(\phi\) determining the net length can be adapted to the growth rate of the product terms. Four divergence types have been identified, ranging from zero divergence to oscillatory divergence, each requiring a specific renormalization: ordinary limit, subtraction of principal parts, Riesz means, or Abel means. The scale-adapted renormalization functional \(\Phi_{\psi,\chi}\) unifies these cases.
	
	Several directions invite further investigation. The extension to other arithmetic functions, such as the Liouville function, the von Mangoldt function, and Dirichlet characters, would connect the framework to the theory of \(L\)-functions. The connection with \'Ecalle's mould calculus deserves deeper exploration: the generalized monoid provides an asymptotic completion of the index set for moulds, potentially opening new perspectives in resurgent analysis and alien calculus. The theory of generalized languages and automata in the asymptotic setting suggested by the prefix metric could lead to applications in verification and theoretical computer science. Finally, the application of this framework to the M\"obius function and other arithmetic functions is the subject of ongoing work.
	
	\begin{appendices}
		
		\section{Examples of moderate and negligible nets}
		\label{app:examples_nets}
		
		This appendix illustrates the definitions of moderate and negligible nets with concrete examples over the singleton alphabet \(\Si=\{a\}\). Each net is written explicitly, its length is computed, and its status (moderate, negligible, both, or neither) is verified.
		
		\textbf{Moderate but not negligible nets.}
		A net is moderate if its length grows at most polynomially in \(1/\varepsilon\). It is negligible if it eventually becomes the empty word. The following nets are moderate but never become empty, so they are not negligible.
		
		\begin{enumerate}[label=(\roman*)]
			\item \textbf{Constant non‑empty net.} \(w_\varepsilon = a^k\) for all \(\varepsilon\), with \(k\ge 1\) fixed. Length \(|w_\varepsilon| = k = O(1)\), hence moderate. Never empty, hence not negligible. Its class represents the finite word \(a^k\).
			
			\item \textbf{Polynomially growing net.} \(w_\varepsilon = a^{\lfloor 1/\varepsilon\rfloor}\). Length \(|w_\varepsilon| = \lfloor 1/\varepsilon\rfloor \le \varepsilon^{-1}\), moderate with \(N=1\). Not negligible. Represents a genuine infinite word.
			
			\item \textbf{Faster polynomial growth.} \(w_\varepsilon = a^{\lfloor 1/\varepsilon^2\rfloor}\). Length \(\le \varepsilon^{-2}\), moderate with \(N=2\). Not negligible.
			
			\item \textbf{Logarithmic growth.} \(w_\varepsilon = a^{\lfloor \log_2(1/\varepsilon)\rfloor}\). Length grows slower than any positive power of \(\varepsilon^{-1}\). For any \(N\ge 1\), one has \(\log_2(1/\varepsilon) \le \varepsilon^{-N}\) for sufficiently small \(\varepsilon\), so the net is moderate. Not negligible.
			
			\item \textbf{Sparse non‑empty entries.} Define \(w_\varepsilon = a^{\lfloor \varepsilon^{-1/2}\rfloor}\) if \(\varepsilon = 1/n\) for some integer \(n\ge 1\), and \(w_\varepsilon = \la\) otherwise. For \(\varepsilon\) of the form \(1/n\), the length is \(\lfloor n^{1/2}\rfloor \le \varepsilon^{-1/2}\). For all other \(\varepsilon\), the length is \(0\). Hence \(|w_\varepsilon| \le \varepsilon^{-1/2}\) for all \(\varepsilon\), and the net is moderate. However, there are arbitrarily small \(\varepsilon\) (namely \(\varepsilon = 1/n\) for arbitrarily large \(n\)) where the net is non‑empty. Thus the net is not eventually empty and therefore not negligible.
		\end{enumerate}
		
		\textbf{Negligible nets.}
		A net is negligible if there exists \(\varepsilon_0>0\) such that \(w_\varepsilon = \la\) for all \(\varepsilon < \varepsilon_0\). All negligible nets are moderate because their length is bounded near zero. The following examples illustrate the variety of negligible nets.
		
		\begin{enumerate}[label=(\roman*)]
			\item \textbf{The simplest negligible net.} \(w_\varepsilon = \la\) for \(\varepsilon < 1/2\), and \(w_\varepsilon = a\) for \(\varepsilon \ge 1/2\). The length is bounded by \(1\), so the net is moderate. For all \(\varepsilon < 1/2\), the net is empty; hence it is negligible. Its class is the identity \(\mathbf{1}\).
			
			\item \textbf{Negligible net with a long tail.} \(w_\varepsilon = \la\) for \(\varepsilon < 1/4\), and \(w_\varepsilon = a^{\lfloor 1/\varepsilon\rfloor}\) for \(\varepsilon \ge 1/4\). For \(\varepsilon < 1/4\), the net is empty, so it is negligible. The polynomial growth for \(\varepsilon \ge 1/4\) does not affect negligibility because only the behaviour near \(\varepsilon=0\) matters. This shows that a negligible net can be arbitrarily wild away from the origin, as long as it eventually vanishes.
			
			\item \textbf{Negligible net that is already empty.} \(w_\varepsilon = \la\) for all \(\varepsilon\). Trivially moderate (length \(0\)) and negligible. Its class is \(\mathbf{1}\).
			
			\item \textbf{Negligible net with a spike.} \(w_\varepsilon = a^{\lfloor 1/\varepsilon\rfloor}\) for \(\varepsilon \in [1/8, 1/4]\), and \(w_\varepsilon = \la\) otherwise. For \(\varepsilon < 1/8\), the net is empty, so it is negligible. The large spike in the interval \([1/8, 1/4]\) is irrelevant to negligibility.
			
			\item \textbf{Negligible net vanishing after a sequence.} Let \(\varepsilon_n = 1/n\). Define \(w_\varepsilon = a^n\) if \(\varepsilon = \varepsilon_n\) with \(n \ge 100\), and \(w_\varepsilon = \la\) otherwise. For \(\varepsilon < 1/100\), the net is empty on a whole neighbourhood \((0, 1/100)\) except possibly at the points \(\varepsilon_n\) themselves. Since the set \(\{\varepsilon_n\}\) is discrete and accumulates only at \(0\), the net is empty on the open interval \((0, 1/101)\) except at isolated points. A net is negligible if it is empty on an interval \((0,\varepsilon_0)\), and isolated non‑empty points do not destroy this property as long as they do not accumulate near \(0\) from the left. Taking \(\varepsilon_0 = 1/101\), the net is empty for all \(\varepsilon \in (0, 1/101)\) that are not of the form \(1/n\). At those isolated points it is non‑empty, but the definition of ``eventually empty'' requires the existence of some \(\varepsilon_0\) such that the net is empty for \emph{all} \(\varepsilon<\varepsilon_0\). Because the points \(1/n\) accumulate at \(0\), there is no such \(\varepsilon_0\). Hence this net is \emph{not} negligible. This example illustrates the subtlety: a net can be empty on a dense set near \(0\) yet fail to be negligible if it has non‑empty entries arbitrarily close to \(0\).
		\end{enumerate}
		
		\textbf{Non‑moderate nets.}
		For completeness, a net that is not moderate: \(w_\varepsilon = a^{\lfloor \exp(1/\varepsilon)\rfloor}\). Its length grows faster than any polynomial in \(1/\varepsilon\), so it is not moderate. Such nets are excluded from the construction.
		
		\section{Numerical illustrations}
		\label{app:numerical}
		
		All computations use standard double‑precision arithmetic.  
		For each product, the evaluation functional is computed on the canonical net with
		\(N_0(\varepsilon)=\lfloor\log_2(1/\varepsilon)\rfloor\) and, when required, the
		symmetrized version \(\widehat{F}_\varepsilon\) is used.  The renormalization
		(subtraction of the principal part or Ces\`aro mean) is then applied as prescribed in
		Section~4.
		
		\subsection*{Alternating products of integers and factorials}
		The symmetrized functional for the integer product converges directly to
		\(\frac12\log(2/\pi)\approx -0.225791\).  For factorials, the raw symmetrized functional
		\(\widehat{F}^{\mathfrak{F}}_\varepsilon\) grows like \(\frac14\log N_0\); after
		subtracting this divergence, the renormalized functional
		\(\widetilde{F}^{\mathfrak{F}}_\varepsilon = \widehat{F}^{\mathfrak{F}}_\varepsilon - \frac14\log N_0\)
		tends to \(\frac14\log(2/\pi)\approx -0.112896\).  Table~\ref{tab:int_fact} illustrates
		both.
		
		\begin{table}[h]
			\centering
			\caption{Symmetrized functionals before and after renormalization. The limits are \(\frac12\log(2/\pi)\approx -0.225791\) (enteros) and \(\frac14\log(2/\pi)\approx -0.112896\) (factoriales renorm.).}
			\label{tab:int_fact}
			\begin{tabular}{c|c|c|c|c}
				\toprule
				\(t\) & \(N_0\) & \(\widehat{F}_{2^{-t}}\) (enteros) & \(\widehat{F}^{\mathfrak{F}}_{2^{-t}}\) (factoriales bruto) & \(\widetilde{F}^{\mathfrak{F}}_{2^{-t}}\) (factoriales renorm.) \\
				\midrule
				10 & 10 & \(-0.203091\) & \(0.484751\) & \(-0.090896\) \\
				20 & 20 & \(-0.214441\) & \(0.647037\) & \(-0.101896\) \\
				30 & 30 & \(-0.218225\) & \(0.744737\) & \(-0.105562\) \\
				40 & 40 & \(-0.220116\) & \(0.814824\) & \(-0.107396\) \\
				50 & 50 & \(-0.221251\) & \(0.869510\) & \(-0.108496\) \\
				\bottomrule
			\end{tabular}
		\end{table}
		
		\subsection*{Alternating product of primes}
		After subtracting the principal part \(\frac12\log\log N_0\), the symmetrized functional
		converges to a constant numerically indistinguishable from \(\frac12\log 2 - \mathfrak{M}\approx 0.08508\) (Table~\ref{tab:prime_ren}).
		
		\begin{table}[h]
			\centering
			\caption{Renormalized symmetrized functional for the alternating prime product. The conjectural limit is \(\frac12\log 2 - \mathfrak{M}\approx 0.08508\).}
			\label{tab:prime_ren}
			\begin{tabular}{c|c}
				\toprule
				\(N_0\) & \(\widetilde{F}^{\mathfrak{P}}_\varepsilon\) \\
				\midrule
				100  & 0.0872 \\
				500  & 0.0867 \\
				1000 & 0.0865 \\
				2000 & 0.0864 \\
				\bottomrule
			\end{tabular}
		\end{table}
		
		\subsection*{Oscillating products with dyadic blocks}
		The generalized monoid prescribes the logarithmic Ces\`aro mean for these products.
		For the integer case the limit is \(\Phi_{\mathrm{LC}}(z)=\frac12\log 2 - \pi^2/12
		\approx -0.475893\), giving \(Q_{\mathrm{ren}} = \sqrt{2}\,e^{-\pi^2/12}\approx 0.621330\).
		For the prime analogue, with \(C_2=\sum_p p^{-2}\approx 0.452247\), the regularized value
		is \(Q_{\mathfrak{P},\mathrm{ren}} = \frac12 e^{-C_2/2}\approx 0.398810\).
		
		\subsection*{Thue--Morse auxiliary functional}
		
		The evaluation functional
		\[
		\mathcal F=\sum_{k=2}^{\infty}\frac{S(k)}{k^2}
		\]
		converges absolutely, because the logarithmic partial sums satisfy
		\[
		|S(k)|=O(\log k).
		\]
		Numerical summation up to \(k_{\max}=2^{20}\approx 10^{6}\) gives
		\[
		\mathcal F\approx -0.0271997036,
		\]
		and therefore
		\[
		\exp(\mathcal F)\approx0.9731668772.
		\]
		
		This quantity is not a regularization of the Thue--Morse product.
		It is an auxiliary functional attached to the logarithmic partial
		sums of the Thue--Morse sequence, chosen because it can be related
		analytically to derivatives of the associated Dirichlet series.
		
	Because the Thue--Morse summatory function is bounded, the ordinary
	partial sums of the logarithmic series do not converge. The auxiliary
	series \(\mathcal F\) is chosen precisely because it is absolutely
	convergent, so no sequence transformation is required to assign it a
	finite value.
	
	The meromorphic continuation of the associated Dirichlet series
	follows from the \(2\)-automaticity of the Thue--Morse sequence; see
	\cite{AlloucheShallit2003}.
	No conjecture on the regularized value of the Thue--Morse product is
	made here. The quantity \(\exp(\mathcal F)\approx0.9731668772\) is
	only an auxiliary value attached to the logarithmic partial sums.
		
		\section{Failure of classical methods for the dyadic block product}
		\label{app:classical_failure}
		
		Consider the sign sequence \(\chi(n)=(-1)^{\lfloor\log_2 n\rfloor}\) and the product
		\(Q = \prod_{n=1}^{\infty} (1 + \chi(n)/n)\). Let \(T_N = \sum_{n=1}^{N} \chi(n)/n\).
		Grouping into dyadic blocks \(I_k=[2^{k},2^{k+1})\) gives
		\[
		T_N = \log 2 \sum_{k=0}^{\lfloor\log_2 N\rfloor} (-1)^k + O(1).
		\]
		The alternating sum takes only the values \(1\) (even block) and \(0\) (odd block). Hence
		\(T_N\) oscillates between \(\log 2 + O(1)\) and \(O(1)\) without converging.
		
		\textbf{Ces\`aro summation.}
		The ordinary Ces\`aro mean is \(\frac{1}{N}\sum_{n=1}^{N} T_n\). Because the dyadic blocks
		grow exponentially, the partial sums spend asymptotically half of the time near each of
		the two limit values. A standard result on non‑convergent Ces\`aro means
		\cite{Hardy1949} implies that the limit does not exist. No higher‑order
		Ces\`aro mean converges either, since the oscillations are not of fixed period.
		
		\textbf{Zeta regularization.}
		Zeta regularization assigns a value to \(\prod n^{t(n)}\) by
		analytically continuing the Dirichlet series
		\[
		D_{\mathrm{TM}}(s)=\sum_{n=1}^{\infty}\frac{t_n}{n^s}
		\]
		and evaluating \(D_{\mathrm{TM}}'(0)\). Although \(t_n\) is not
		multiplicative, the series admits a meromorphic continuation because
		\(t_n\) is \(2\)-automatic. Splitting the series into even and odd
		indices yields a functional equation
		\[
		(1-2^{-s})D_{\mathrm{TM}}(s)
		=
		-\sum_{n=0}^{\infty}\frac{t_n}{(2n+1)^s}.
		\]
		From this equation, the meromorphic continuation and the regularity at
		\(s=0\) follow by standard results on automatic sequences
		\cite{AlloucheShallit2003}.
		Hence zeta regularization is in principle available for the
		Thue--Morse product.
		
		Thus both classical summation methods and zeta regularization are inapplicable to the
		dyadic block product, leaving the generalized monoid as the only framework that assigns
		it a well‑defined finite value.
		
		\section{Analysis of the dyadic block sum over primes}
		\label{app:dyadic_prime_sum}
		
		This appendix provides the detailed estimates that underpin the discussion of the oscillating product over primes with dyadic blocks (Section~\ref{sec:applications}). We prove that the contribution of the \(k\)-th block to the critical sum is \((-1)^{k}/k + o(1/k)\), that the resulting series converges conditionally to \(-\log 2\), and that neither classical Ces\`aro summation nor zeta regularization can assign a unique finite value to the product.
		
		Let \(p_n\) denote the \(n\)-th prime and let \(\chi(p_n)=(-1)^{\lfloor\log_2 n\rfloor}\). For each integer \(k\ge 0\) define the dyadic block of indices \(I_k = [2^{k}, 2^{k+1})\). On this block the sign is constant: \(\chi(p_n)=(-1)^k\) for all \(n\in I_k\). The sum of reciprocals over the primes in the block is
		\[
		S_k = \sum_{n\in I_k} \frac{1}{p_n}.
		\]
		
		\begin{lemma}\label{lem:block_prime_sum}
			As \(k\to\infty\),
			\[
			S_k = \frac{1}{k} + o\!\left(\frac{1}{k}\right).
			\]
		\end{lemma}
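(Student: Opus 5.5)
The plan is to work directly with the \emph{index} block \(I_k=[2^k,2^{k+1})\) and the asymptotic \(p_n\sim n\log n\) from the Prime Number Theorem, as used earlier in the paper for the alternating prime product. I will not use Mertens' theorem on a dyadic range of prime \emph{values}. Write \(p_n = n\log n\cdot\rho_n\) with \(\rho_n\to1\), as in Section~\ref{sec:applications}. Then
\[
S_k=\sum_{n\in I_k}\frac{1}{n\log n}\cdot\frac{1}{\rho_n}.
\]
The proof reduces to two uniform approximations on \(I_k\), followed by an elementary harmonic sum.

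\textbf{Step 1 (uniformity of \(\rho_n\)).} Set \(\eta_k:=\sup_{n\ge 2^k}|\rho_n^{-1}-1|\). Since \(\rho_n\to1\), we have \(\eta_k\to0\), and for every \(n\in I_k\) the factor \(1/\rho_n\) lies in \([1-\eta_k,1+\eta_k]\). This gives
\[
S_k=(1+O(\eta_k))\sum_{n\in I_k}\frac{1}{n\log n}.
\]

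\textbf{Step 2 (freezing the logarithm).} For \(n\in I_k\) we have \(k\log2\le\log n<(k+1)\log 2\). Hence
\[
\frac{1}{\log n}=\frac{1}{k\log 2}\bigl(1+O(1/k)\bigr)
\]
uniformly on the block, and so
\[
\sum_{n\in I_k}\frac{1}{n\log n}=\frac{1+O(1/k)}{k\log2}\sum_{n\in I_k}\frac1n .
\]

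\textbf{Step 3 (harmonic block sum).} The paper already uses
\[
\sum_{n=2^k}^{2^{k+1}-1}\frac1n=\log2+O(2^{-k}),
\]
which follows from \(H_N=\log N+\gamma+O(1/N)\). Combining the three steps,
\[
S_k=\frac{1}{k}\bigl(1+O(\eta_k)\bigr)\bigl(1+O(1/k)\bigr)\bigl(1+O(2^{-k})\bigr)=\frac1k+o\!\left(\frac1k\right).
\]
As a consistency check, the intermediate sum is \(\sum_{n\in I_k}1/(n\log n)\approx\log\frac{k+1}{k}\), matching the form \(\log\frac{k+1}{k}+o(1)\) quoted in the main text, but now with the sharper error \(o(1/k)\) that the lemma needs.

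\textbf{Main obstacle.} The only delicate point is Step 1. The relative error must be controlled \emph{uniformly} over the whole block, so that it multiplies the main term \(1/k\) rather than adding an \(o(1)\) that would swamp it. The supremum \(\eta_k\) over the tail handles this, because \(\rho_n\to1\) is a statement about a single sequence. If a quantitative rate were wanted, I would first try \(p_n=n(\log n+\log\log n-1+o(1))\) (Cipolla). This gives \(\rho_n=1+O(\log\log n/\log n)\), hence
\[
S_k=\frac1k+O\!\left(\frac{\log k}{k^2}\right).
\]
This refinement is not needed for the lemma as stated.
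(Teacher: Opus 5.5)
Your proof is correct, and it takes a different route from the paper's. The paper counts primes via $\pi(2^{k+1})-\pi(2^k)$ and subtracts Mertens' estimate $\sum_{p\le x}1/p=\log\log x+M+O(1/\log x)$ at $x=2^{k+1}$ and $x=2^{k}$. In doing so it treats $I_k$ as a range of prime \emph{values} rather than of indices. The two remainders also combine to an $O(1/k)$ error, which is of the same order as the main term $\log(1+1/k)$. So that subtraction alone gives only $S_k=O(1/k)$, not $S_k=1/k+o(1/k)$. Your argument handles $S_k=\sum_{n\in I_k}1/p_n$ exactly as the lemma defines it, and its only arithmetic input is $p_n\sim n\log n$. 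Every approximation you make is relative and uniform on the block, which is what an $o(1/k)$ conclusion requires; with Cipolla's expansion you even get the explicit rate $O(\log k/k^2)$. What the paper's value-range approach buys, once repaired, is the companion statement $\sum_{2^k\le p<2^{k+1}}1/p=1/k+o(1/k)$. That is the relevant sum under the literal reading $\chi(p_n)=(-1)^{\lfloor\log_2 p_n\rfloor}$ of the main text. The repair is a Mertens remainder of size $o(1/\log x)$, for instance $O(e^{-c\sqrt{\log x}})$ from the prime number theorem with the de la Vall\'ee Poussin error term, or partial summation from $\pi(x)\sim x/\log x$ on $[y,2y]$. Both readings share the leading term $\log\frac{k+1}{k}\sim 1/k$, which is what your consistency check reflects.
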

		
		\begin{proof}
			By the Prime Number Theorem, the number of primes in \(I_k\) is
			\[
			\pi(2^{k+1}) - \pi(2^{k}) \sim \frac{2^{k+1}}{(k+1)\log 2} - \frac{2^{k}}{k\log 2} \sim \frac{2^{k}}{k\log 2}.
			\]
			For any prime \(p\) in this interval, \(\log p \sim k\log 2\). Mertens' first theorem \cite{Tenenbaum2015} states that
			\[
			\sum_{p\le x} \frac{1}{p} = \log\log x + M + O\!\left(\frac{1}{\log x}\right),
			\]
			where \(M\) is the Meissel–Mertens constant. Applying this to \(x = 2^{k+1}\) and \(x = 2^{k}\) and subtracting yields
			\begin{align*}
				S_k &= \log\log(2^{k+1}) - \log\log(2^{k}) + O\!\left(\frac{1}{k}\right)\\
				&= \log\frac{(k+1)\log 2}{k\log 2} + O\!\left(\frac{1}{k}\right)
				= \log\!\left(1+\frac{1}{k}\right) + O\!\left(\frac{1}{k}\right).
			\end{align*}
			Since \(\log(1+1/k) = 1/k + O(1/k^{2})\), we obtain \(S_k = 1/k + o(1/k)\).
		\end{proof}
		
		\textbf{Block contribution to the sum \(\sum \chi(p_n)/p_n\).}
		For \(n\in I_k\) we have \(\chi(p_n)=(-1)^k\), so the contribution of the \(k\)-th block is
		\[
		\sum_{n\in I_k} \frac{\chi(p_n)}{p_n} = (-1)^k S_k = \frac{(-1)^k}{k} + o\!\left(\frac{1}{k}\right).
		\]
		
		Thus the full series \(\sum_{n=1}^{\infty} \chi(p_n)/p_n\) behaves like the alternating harmonic series \(\sum_{k=1}^{\infty} (-1)^k/k\) plus an absolutely convergent error. The alternating harmonic series converges conditionally to \(-\log 2\) (if the sum starts at \(k=1\) with term \(-1\); the exact value depends on the starting index but is always \(c\log 2\) for some constant \(c\)). Consequently, the sequence of partial sums of \(\sum \chi(p_n)/p_n\) oscillates and does not converge to a single limit.
		
		\textbf{Failure of Ces\`aro summation.}
		The ordinary Ces\`aro mean of a sequence \((a_n)\) is \(\frac{1}{N}\sum_{n=1}^{N} a_n\). For the block sum, the partial sums of \(\sum \chi(p_n)/p_n\) remain near two distinct values (multiples of \(\log 2\)) over exponentially long intervals. Because the blocks grow exponentially, the proportion of time spent near each value does not tend to a limit; the Ces\`aro mean oscillates and does not converge. A general result on non‑convergent Ces\`aro means for series with exponential gaps can be found in \cite[Chapter~V]{Hardy1949}.
		
		\textbf{Inapplicability of zeta regularization.}
		Zeta regularization assigns a value to a product \(\prod q_n^{a(n)}\) by analytically continuing the Dirichlet series \(D(s)=\sum a(n) q_n^{-s}\) and evaluating its derivative at \(s=0\). Here \(a(n)=\chi(p_n)\) and \(q_n = p_n\). The sequence \(\chi(p_n)\) is not multiplicative, so \(D(s)\) does not admit an Euler product. Without an Euler product, no explicit meromorphic continuation of \(D(s)\) is known, and there is no standard method to compute \(D'(0)\). Furthermore, even if a continuation could be defined, the product is only conditionally convergent; its regularized value would depend on the ordering of the factors \cite{Hardy1949}. Hence zeta regularization cannot produce a canonical value.
		
		\section{Failure of classical methods for the Thue--Morse product}
		\label{app:classical_failure_thue_morse}
		
		Consider the Thue--Morse sequence \(t(n)=(-1)^{s_2(n)}\) and the product
		\(R = \prod_{n=1}^{\infty} n^{t(n)}\).  We verify that four standard regularization
		techniques --- classical convergence, Ces\`aro summation, zeta regularization, and
		Borel summation --- do not assign a finite value to this product.
		
		\textbf{Classical limit.}
		The partial products satisfy \(\log R_N = \sum_{n=1}^{N} t(n)\log n\).
		Let \(T(N)=\sum_{n=1}^{N} t(n)\) be the summatory function of the Thue--Morse
		sequence.  Gelfond \cite{Gelfond1968} proved the sharp bound
		\(|T(N)| \le C N^{\log_3 2}\) with \(\log_3 2 \approx 0.6309\).  By Abel
		summation,
		\[
		\sum_{n=1}^{N} t(n)\log n = T(N)\log N - \int_{1}^{N} \frac{T(u)}{u}\,du.
		\]
		The integral grows at least like \(N^{\log_3 2}\) because \(T(u)\) oscillates
		with that amplitude, while the boundary term \(T(N)\log N\) does not dominate.
		Hence \(\log R_N\) does not converge; the product has no classical limit.
		
		\textbf{Ces\`aro summation.}
		Mahler \cite{Mahler1929} proved that the Thue--Morse sequence is not Ces\`aro
		summable.  Gelfond \cite{Gelfond1968} later refined this result, showing that
		the Ces\`aro means \(C_N = \frac{1}{N}\sum_{n=1}^{N} t(n)\) satisfy
		\(\liminf C_N < 0 < \limsup C_N\).  Consequently the product \(R\) is not
		Ces\`aro summable in the ordinary sense.  The same reference
		\cite{AlloucheShallit2003} shows that the sequence is not
		\((C,k)\)-summable for any finite order \(k\).
		
	\textbf{Zeta regularization.}
	Zeta regularization assigns a value to \(\prod n^{a(n)}\) by analytically
	continuing the Dirichlet series \(D(s)=\sum a(n) n^{-s}\) and evaluating
	\(D'(0)\).  For \(a(n)=t(n)\) the series converges for \(\Re(s)>1\), and
	although \(t(n)\) is not multiplicative, the series admits a meromorphic
	continuation because \(t_n\) is \(2\)-automatic \cite{AlloucheShallit2003}.
	No closed form for this continuation beyond \(\Re(s)>1\) is known; the
	evaluation of \(D'(0)\) is therefore computationally intractable.  In
	practice, zeta regularization cannot be computed explicitly.
		
		\textbf{Borel summation.}
	The formal Borel transform of the series $\sum t(n)\log n$ is 
	$\sum \frac{t(n)\log n}{n!} z^n$. Although this power series converges for 
	every $z\in\mathbb{C}$ (infinite radius of convergence), the analytically 
	continued function exhibits non‑integrable growth along the positive real axis, 
	rendering standard Borel integration inapplicable.
		
		\section{Heuristic expansion and numerical evidence for the alternating product of primes}
		\label{app:prime_rigorous}
		
		In this appendix we sketch the asymptotic expansion of the alternating prime product that leads to Conjecture~\ref{conj:primes}, and we present numerical evidence supporting the conjectural value \(\sqrt{2}\,e^{-\mathfrak{M}}\).  Because the constant term has not been evaluated by a closed-form analytic argument, the material below is heuristic and is intended only to illustrate the plausibility of the conjecture.
		
		Let \(p_n\) be the \(n\)-th prime and consider the log‑mould
		\(\Lambda_{\mathfrak{P}}(w)=\sum_{k=1}^{|w|} \varepsilon_k \log p_k\).  With the canonical alternating net,
		the symmetrized functional is
		\[
		\widehat{F}^{\mathfrak{P}}_\varepsilon = \frac12\bigl(\log\mathfrak{P}_{2m}+\log\mathfrak{P}_{2m+1}\bigr),\qquad m = \lfloor N_0/2\rfloor .
		\]
		
		From the Prime Number Theorem \(p_n = n\log n + n\log\log n - n + o(n)\) we obtain
		\(\log p_n = \log n + \log\log n + r_n\) with \(r_n \to 0\).  Consequently
	\[
	\begin{aligned}
		\log\mathfrak{P}_{2m}
		={}&\sum_{j=1}^{m}
		\bigl(\log p_{2j-1}-\log p_{2j}\bigr)\\
		={}&\underbrace{\sum_{j=1}^{m}
			\bigl(\log(2j-1)-\log(2j)\bigr)}_{A_m}\\
		&+\underbrace{\sum_{j=1}^{m}
			\bigl(\log\log(2j-1)-\log\log(2j)\bigr)}_{B_m}\\
		&+\sum_{j=1}^{m}(r_{2j-1}-r_{2j}).
	\end{aligned}
	\]
		Using Stirling approximations one finds
		\[
		A_m = -\frac12\log m - \frac12\log\pi + o(1),\qquad
		B_m = -\frac12\log\log m + \kappa + o(1),
		\]
		where \(\kappa\) is an explicit constant that could be computed from the Euler–Maclaurin formula.  The alternating sum of the remainders converges by Leibniz’s test; denote its limit by \(R\).  Thus
		\[
		\log\mathfrak{P}_{2m} = -\frac12\log m - \frac12\log\log m + C + o(1),\qquad
		C = -\frac12\log\pi + \kappa + R .
		\]
		Similarly,
		\[
		\log\mathfrak{P}_{2m+1} = \frac12\log m + \frac12\log\log m + \log 2 + C + o(1).
		\]
		Hence
		\[
		\widehat{F}^{\mathfrak{P}}_\varepsilon = \frac12\log 2 + C + o(1).
		\]
		Therefore the regularized product would be \(\exp(C + \frac12\log 2)\).
		
		The constant \(C\) is not evaluated rigorously here.  However, extensive numerical computation of \(\widehat{F}^{\mathfrak{P}}_\varepsilon - \frac12\log\log N_0\) for large \(N_0\) (up to \(10^6\)) shows convergence to approximately \(0.08508\), which equals \(\frac12\log 2 - \mathfrak{M}\) to six decimal places.  This strongly suggests \(C = -\mathfrak{M}\), leading to the conjectural value \(\sqrt{2}\,e^{-\mathfrak{M}}\).  Table~\ref{tab:prime_ren} in Appendix~\ref{app:numerical} documents this convergence.
		
	A rigorous identification of $C$ would require relating the alternating sums of $\log\log n$
	and $r_n$ to the Meissel–Mertens constant, which remains an open problem.
		
		\section{The Thue--Morse product: a conjecture}
		\label{app:thue_morse}
		
		Let \(t(n)=(-1)^{s_2(n)}\) be the Thue--Morse sequence (\(t(1)=-1\), \(t(2)=-1\), \(t(3)=+1\), \dots).
		Define the partial sums
		\[
		S(N)=\sum_{n=1}^{N} t(n)\log n ,\qquad N\ge 1,
		\]
		and the evaluation functional on the canonical net
		\[
		\mathcal{F}_\varepsilon \;=\; \sum_{k=2}^{N_0(\varepsilon)} \frac{S(k)}{k^{2}},
		\qquad N_0(\varepsilon)=\bigl\lfloor\log_2(1/\varepsilon)\bigr\rfloor .
		\]
		
		Using Gelfond's bound \(|T(N)|\le C N^{\theta}\) with \(\theta=\log_3 2\), summation by parts gives
		\[
		|S(N)|\le C_1 N^{\theta}\log N .
		\]
		Hence the series \(\sum_{k=2}^\infty S(k)/k^2\) converges absolutely, and the limit
		\(F:=\lim_{\varepsilon\to0^+}\mathcal{F}_\varepsilon\) exists.

	\noindent\textbf{Status of the Thue--Morse product.}
	The original Thue--Morse product
	\[
	P_{\mathrm{TM}}=\prod_{n\ge1}n^{t_n}
	\]
	is divergent. Its zeta-regularized value is
	\[
	\mathcal R_{\mathrm{zeta}}(P_{\mathrm{TM}})
	=
	\exp\bigl(-D_{\mathrm{TM}}'(0)\bigr),
	\]
	where
	\[
	D_{\mathrm{TM}}(s)=\sum_{n=1}^{\infty}\frac{t_n}{n^s}
	\]
	admits a meromorphic continuation because \(t_n\) is \(2\)-automatic.
	
	The auxiliary series
	\[
	\mathcal F=\sum_{k=2}^{\infty}\frac{S(k)}{k^2}
	\]
	is not a regularization of \(P_{\mathrm{TM}}\). It is a convergent
	weighted functional attached to the logarithmic partial sums of the
	Thue--Morse sequence. Its value is approximately
	\[
	\exp(\mathcal F)\approx0.9731668772.
	\]
	
	No identity between \(\mathcal F\) and \(D_{\mathrm{TM}}'(0)\) is
	claimed here. The relationship between the auxiliary value and the
	zeta-regularized value remains a separate analytical question.
		
	\end{appendices}
	

	\backmatter
	
	\section*{Declarations}
	
	\subsection*{Funding}
	The authors declare that no funds, grants, or other support were received during the preparation of this manuscript.
	
	\subsection*{Conflict of interest}
	The authors have no relevant financial or non-financial interests to disclose.
	
	\subsection*{Author contributions}
	All authors contributed to the study conception, design, and manuscript preparation. All authors read and approved the final manuscript.
	
	\subsection*{Data availability}
	No datasets were generated or analysed during the current study.

\end{document}